\documentclass[a4paper,11pt]{amsart}

\usepackage{amsmath}
\usepackage{amssymb}
\usepackage{amsthm}
\usepackage[all]{xy}

\newtheorem{thm}{Theorem}[section]
\newtheorem{lemma}{Lemma}[section]

\newtheorem{remark}{Remark}[section]

 \newcommand{\ux}{\underline{x}}

 \newcommand{\upx}{\partial_{\underline{x}}}

 \newcommand{\laf}{\mathfrak f}
\newcommand{\lag}{\mathfrak g}
\newcommand{\lap}{\mathfrak p}

\newcommand{\lan}{\mathfrak n}

\newcommand{\lasp}{\mathfrak{sp}}
\newcommand{\laosp}{\mathfrak{osp}}

\newcommand{\lagl}{\mathfrak{gl}}

\newcommand{\lat}{\mathfrak{t}}

\newcommand{\mS}{\mathbb S}

\newcommand{\mC}{\mathbb C}

\newcommand{\mR}{\mathbb R}
\newcommand{\mN}{\mathbb N}

\newcommand{\mZ}{\mathbb Z}

\newcommand{\SO}{{SO}}

\newcommand{\GL}{{GL}}

\newcommand{\Spin}{{Spin}}

 \newcommand{\cH}{\mathcal{H}}

\newcommand{\cC}{\mathcal{C}}
\newcommand{\cR}{\mathcal{R}}
\newcommand{\cD}{\mathcal{D}}
\newcommand{\cS}{\mathcal{S}}

\newcommand{\cP}{\mathcal{P}}
\newcommand{\cM}{\mathcal{M}}

\newcommand{\cA}{\mathcal A}

\newcommand{\cJ}{\mathcal J}
\newcommand{\cI}{\mathcal I}
\newcommand{\cU}{\mathcal U}
\newcommand{\cW}{\mathcal W}

\newcommand{\End}{\operatorname{End}}
\newcommand{\Ind}{\operatorname{Ind}}
\newcommand{\lspan}{\operatorname{span}}
\newcommand{\Ker}{\operatorname{Ker}}
\newcommand{\trace}{\operatorname{tr}}
\newcommand{\largewedge}{\mbox{\Large $\wedge$}}

\begin{document}

\title[Fischer decomposition in several variables]{Fischer decomposition for spinor valued polynomials in several variables}

\author{R. L\'avi\v cka}

\author{V. Sou\v cek}

\address{Charles University, Faculty of Mathematics and Physics, Mathematical Institute\\
Sokolovsk\'a 83, 186 75 Praha, Czech Republic}

\email[R. L\'avi\v cka]{lavicka@karlin.mff.cuni.cz}

\email[V. Sou\v cek]{soucek@karlin.mff.cuni.cz}

\thanks{The support of the grant GACR 17-01171S is gratefully acknowledged.}


\begin{abstract}
It is well-known that polynomials decompose into spherical harmonics. This result is called separation of variables or the Fischer decomposition.  
In the paper  we prove the Fischer decomposition for spinor valued polynomials in $k$ vector variables of $\mR^m$ under the stable range condition $m\geq 2k$. Here the role of spherical harmonics is played by monogenic polynomials, that is, polynomial solutions of the Dirac equation in $k$ vector variables. 
\end{abstract}

\subjclass{30G35, 17B10} 
\keywords{Fischer decomposition, separation of variables, spherical harmonics, monogenic polynomials, Dirac equation}

\maketitle


\section{Introduction}
 
Each polynomial $P$ in the Euclidean space $\mR^m$ decomposes uniquely as
$$P=H_0+r^2H_1+\cdots+r^{2j}H_j+\cdots$$
where $r^2=x_1^2+\cdots+x_m^2$, $(x_1,\ldots,x_m)\in\mR^m$ and $H_j$ are harmonic polynomials in $\mR^m$.
Under the natural action of the orthogonal group $\SO(m)$, this decomposition of polynomials is invariant, $r^2$ generates the algebra of invariant polynomials and the whole space of  polynomials is the tensor product of invariants and spherical harmonics. 
An analogous result known as separation of variables or the Fischer decomposition was obtained in various cases and 
for other symmetry groups \cite{CW,G,Ho1,KV,Cou,LS}.

For example, spinor valued polynomials in one variable of $\mR^m$ decompose into monogenic polynomials \cite{DSS}. 
A~polynomial $P$ in $\mR^m$ taking values in the spinor space $\mS$ is called monogenic if it satisfies the equation $\upx P=0$ where 
$$\upx:=\sum_{i=1}^m e_i\partial_{x_i}$$
is the Dirac operator in $\mR^m$. Here $(e_1,\ldots,e_m)$ is an orthonormal basis for $\mR^m$.
Then each polynomial $P:\mR^m\to\mS$ decomposes uniquely as
$$P=M_0+\ux M_1+\cdots+\ux^j M_j+\cdots$$
where $\ux=x_1e_1+\cdots+x_me_m$ is the vector variable of $\mR^m$ and $M_j$ are monogenic polynomials.

Function theory for the Dirac operator $\upx$ is called Clifford analysis and it generalizes complex analysis to higher dimensions.
An important feature of the Fischer decomposition
 is the fact that its pieces behave well under the action of the
 conformal spin group, which is the symmetry group of the Dirac equation.
Its transcendental version is usually called the Almansi
 decomposition. 
 
Clifford analysis in several variables is a natural generalization
of theory of functions of several complex variables. Its study started about half century ago, however its
basic principles and facts are still not well understood.
One of the most important basic questions is to formulate and to prove an analogue of the Fischer decomposition for spinor valued polynomials in $k$ vector variables of $\mR^m$, that is, for polynomials $P:(\mR^m)^k\to\mS$.

It became soon clear that it is necessary to distinguish different cases. The simplest case is the so called stable range $m\geq 2k$.  
After a longer evolution, a corresponding conjecture was formulated 
in the book by F.~Colombo, F.~Sommen, I.~Sabadini, D.~Struppa (\cite{CSSS}, Conj.\ 4.2.1, p.\ 236). Recently, the conjecture was proved in the case of two variables (\cite{L}). 

The main purpose of the paper is to prove the Fischer decomposition
for any number of variables in the stable range. The proof of the conjecture is based on the Fischer decomposition for scalar valued polynomials in several variables.
The scalar case in the stable range is well known (see \cite{GW}). Recently,  in the scalar case, an alternative approach leading to explicit formulae for different projections in the decomposition
 was developed in \cite{DER} for two variables.

Let us state the main result. 
Polynomials $P:(\mR^m)^k\to\mS$ depend on $k$ vector variables 
$\ux_1,\ldots,\ux_k$ of $\mR^m$ with $\ux_j=x_{1j}e_1+\cdots+x_{mj}e_m$.
Such a~polynomial $P$ is called monogenic if it satisfies all the corresponding Dirac equations 
$$\partial_{\ux_1} P=0,\ldots, \partial_{\ux_k} P=0.$$ It is easy to see that the Dirac operators $\partial_{\ux_j}$ and the left multiplication by the vector variables $\ux_j$ are invariant operators on the space $\cP((\mR^m)^k)\otimes\mS$ of spinor valued polynomials on $(\mR^m)^k$ under the natural action of the spin group $\Spin(m)$, the double cover of $\SO(m)$. Denote by $\cJ$ the algebra of invariants generated by the vector variables $\ux_1,\ldots,\ux_k$. Then, in the stable range, we prove that 
$\cP((\mR^m)^k)\otimes\mS$ is isomorphic to the tensor product $\cJ\otimes\cM$
where $\cM$ is the space of spinor valued monogenic polynomials on $(\mR^m)^k.$   
Indeed, we show the following result we refer to as the monogenic Fischer decomposition in the stable range.

\begin{thm}\label{t_mfd} For $J\subset\{1,2,\ldots,k\}$, $J=\{j_1<\cdots<j_r\}$, denote $$\ux_J=\ux_{j_1}\cdots \ux_{j_r}.$$
For $1\leq i\leq j\leq k$, put
$
r^2_{ij}:=x_{1i} x_{1j}+x_{2i} x_{2j}+\cdots+x_{mi} x_{mj}.
$

\smallskip\noindent
	If $m\geq 2k$, then we have
	\begin{equation}\label{e_mfd}
	\cP((\mR^m)^k)\otimes\mS=\bigoplus_{J,\{n_{ij}\}} \Big(\prod_{1\leq i\leq j\leq k}r_{ij}^{2n_{ij}}\Big) \ux_J  \cM
	\end{equation}
	where the direct sum is taken over all subsets $J$ of $\{1,2,\ldots,k\}$ and all sequences $\{n_{ij}|\ 1\leq i\leq j\leq k\}$ of numbers of $\mN_0$.
\end{thm}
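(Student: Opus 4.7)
My plan is to prove that the multiplication map $\mu:\cJ\otimes\cM\to\cP((\mR^m)^k)\otimes\mS$, $I\otimes M\mapsto IM$, is a linear isomorphism, where $\cJ$ denotes the algebra generated by $\ux_1,\ldots,\ux_k$ as in the statement. The first step will be to identify, in the stable range, a basis of $\cJ$. The Clifford relations $\ux_i\ux_j+\ux_j\ux_i=-2r^2_{ij}$ give a PBW-type reduction, and together with the algebraic independence of the $r^2_{ij}$ for $m\geq 2k$ (a consequence of the first fundamental theorem of invariant theory for $\mathrm{O}(m)$), they yield the basis $\{(\prod_{i\leq j}r_{ij}^{2n_{ij}})\ux_J\}$ of $\cJ$. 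The theorem is then equivalent to the bijectivity of $\mu$.

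For surjectivity of $\mu$, I would argue by induction on total polynomial degree, the base case of constants being immediate since $\mS\subseteq\cM$. For the inductive step I would invoke the one-step Fischer-type splitting $\cP((\mR^m)^k)\otimes\mS=\cM\oplus\sum_{j=1}^k\ux_j(\cP((\mR^m)^k)\otimes\mS)$. This is standard: under a suitable Fischer inner product on $\cP((\mR^m)^k)\otimes\mS$, multiplication by $\ux_j$ is adjoint (up to a Clifford involution) to $\partial_{\ux_j}$, so the orthogonal complement of $\sum_j\ux_j(\cP((\mR^m)^k)\otimes\mS)$ is $\bigcap_j\ker\partial_{\ux_j}=\cM$. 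Writing $P=M+\sum_j\ux_jP_j$ with $\deg P_j<\deg P$ and applying the inductive hypothesis to each $P_j$ then gives $P_j\in\cJ\cdot\cM$, whence $P\in\cJ\cdot\cM$ since $\ux_j\cJ\subseteq\cJ$.

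For injectivity of $\mu$, i.e.\ directness of the sum in \eqref{e_mfd}, I would again leverage the Fischer inner product. Using the adjoint relations $\ux_j^*=\pm\partial_{\ux_j}$ and $(r^2_{ij})^*=\Delta_{ij}$ (with $\Delta_{ij}=\sum_s\partial_{x_{si}}\partial_{x_{sj}}$) together with the commutator identities $\{\partial_{\ux_i},\ux_j\}=-m\delta_{ij}-2E_{ij}$ (where $E_{ij}=\sum_s x_{sj}\partial_{x_{si}}$) and the analogous brackets among $\Delta_{ij}$ and $r^2_{lm}$, one may evaluate the Fischer pairing of two basis summands $(\prod r^{2n_{ij}})\ux_JM$ and $(\prod r^{2n'_{ij}})\ux_{J'}M'$ and show that it vanishes unless $(J,\{n_{ij}\})=(J',\{n'_{ij}\})$. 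The scalar Fischer decomposition \cite{GW} in the stable range enters here, guaranteeing that the analogous scalar pairing on the product of $\mC[r^2_{ij}]$ with the joint harmonics is non-degenerate.

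The main obstacle is this directness argument: the commutator bookkeeping is intricate and the stable range is crucial in order to rule out accidental cancellations. A more conceptual alternative, which I would keep in reserve, is Howe duality for the pair $(\Spin(m),\mathfrak{osp}(1|2k))$: in the stable range $\cP((\mR^m)^k)\otimes\mS$ is a multiplicity-free joint module, each $\mathfrak{osp}(1|2k)$-factor is freely generated from a monogenic seed by the raising operators $\ux_j$ and $r^2_{ij}$, and \eqref{e_mfd} is precisely the resulting PBW-type basis. One can also attempt induction on $k$ using the base cases $k=1$ from \cite{DSS} and $k=2$ from \cite{L}.
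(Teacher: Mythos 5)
Your surjectivity argument (Lemma \ref{l_wmfd} in the paper) is exactly the one the paper uses: the Fischer-adjoint relation between $\ux_j$ and $\partial_{\ux_j}$ gives the one-step splitting
$\cP\otimes\mS=\cM\oplus\sum_j\ux_j(\cP\otimes\mS)$,
and iterating together with $\ux_j\ux_i=-\ux_i\ux_j+2r^2_{ij}$ yields the (a priori non-direct) spanning statement.

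The injectivity argument, however, has a genuine gap. The claim that the Fischer pairing of
$\bigl(\prod r_{ij}^{2n_{ij}}\bigr)\ux_J M$ and $\bigl(\prod r_{ij}^{2n'_{ij}}\bigr)\ux_{J'}M'$
vanishes unless $(J,\{n_{ij}\})=(J',\{n'_{ij}\})$ is simply false. Take $k\geq2$, $M=M'$ monogenic, $J=\{1,2\}$ with all $n_{ij}=0$, and $J'=\emptyset$ with $n'_{12}=1$. Using $[\partial_{\ux_1},r^2_{12}]=\ux_2$, $\partial_{\ux_1}M'=\partial_{\ux_2}M'=0$, the adjoint $\ux_j^*=-\partial_{\ux_j}$, and $\{\partial_{\ux_2},\ux_2\}=-m-2E_{22}$, one computes
$$\langle\,\ux_1\ux_2 M,\ r^2_{12}M\,\rangle=-\langle\,\ux_2M,\ \ux_2M\,\rangle=-(m+2d_2)\,\|M\|^2\neq0,$$
where $d_2$ is the degree of $M$ in $\ux_2$. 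So the summands in \eqref{e_mfd} are \emph{not} mutually orthogonal with respect to the Fischer inner product, even though their sum is direct. No amount of ``commutator bookkeeping'' can make the pairing vanish here, because it genuinely does not vanish. This is precisely the reason the paper does not try to prove directness by orthogonality. Instead it first uses the orthogonal splitting \eqref{e_HarmDecomp} to reduce to the harmonic space and then proves that $\cH_\ell\otimes\mS=\bigoplus_J\pi(\ux_J\cM_{\ell-|J|})$ is direct (Theorem \ref{t_idh}) by a multiplicity count: Klimyk's formula for $\cH^S_a\otimes\mS$ (Lemma \ref{l_idhs}) on one side and the decomposition $\cM=\bigoplus_a\cM^S_a\otimes F_{a+\frac m2 1_k}$ (Theorem \ref{t_idm}) together with Pieri's rule for $\GL(k)$ on the other. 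Directness of \eqref{e_mfd} then comes from combining Lemma \ref{l_wmfd}, the scalar Fischer decomposition (Theorem \ref{t_hfd}) applied to $r_{ij}^{2n_{ij}}$, and Theorem \ref{t_idh}.

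Your fallback via $(\Spin(m),\laosp(1|2k))$-duality is not available either as stated, at least not without substantial extra work: in the paper, the duality statement (Theorem \ref{t_isod}) is \emph{deduced} from Theorem \ref{t_mfd} via the PBW theorem for Lie superalgebras, so invoking it here would be circular. An independent proof of the duality would require exactly the kind of multiplicity analysis that the paper carries out. Your identification of a monomial basis of $\cJ$ in the stable range is fine, but the core of the theorem is the directness of the sum, and that is where the proposal does not close.
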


For two variables, the stable range means that the dimension $m$ is bigger or equal to 4. In \cite{L}, it was proved that
the Fischer decomposition holds also for dimension 3. There are indications that, in the scalar case, the 'stable' Fischer decomposition  could hold also in one dimension less than the stable range in general, that is, when $m\geq 2k-1$. If so,
the same would be true also for the monogenic Fischer decomposition.
Out of the stable range, there is no reasonable conjecture available and
further study is needed.

In the space $\cP((\mR^m)^k)$ of scalar valued polynomials on $(\mR^m)^k$,
the isotypic components of the action of $\SO(m)$ 
have infinite multiplicities. The Howe duality theory removes the multiplicities and shows that the space
$\cP((\mR^m)^k)$ decomposes under the action of 
the dual pair $SO(m)\times\lasp(2k)$ with multiplicity one (\cite{Ho1,G,HTW}). An analogous role for spinor valued polynomials $\cP((\mR^m)^k)\otimes\mS$ is played by the pair $\Spin(m)\times\laosp(1|2k)$ where the Lie superalgebra $\laosp(1|2k)$ is generated by the odd operators $\partial_{\ux_j}$ and $\ux_j$ for $j=1,\ldots,k$. 
Indeed, we can reformulate Theorem \ref{t_mfd} as a~duality between finite dimensional representations of $\Spin(m)$ and infinite dimensional representations of $\laosp(1|2k)$.
In particular, in $\cP((\mR^m)^k)\otimes\mS$, the isotypic components of $\Spin(m)$ give explicit realizations of lowest weight modules of $\laosp(1|2k)$ with lowest weights $(a_1+(m/2),\ldots,a_k+(m/2))$ for integers $a_1\geq \cdots\geq a_k\geq 0$, 
see Theorem \ref{t_isod}. In this connection, we mention that there are not so many known constructions of such modules, see e.g.\ the paraboson Fock space \cite{LSJ}, and for a~classification, we refer to \cite{DZ,DS}.  

In the paper, after preliminaries and notation in Section 2, the Fischer 
decomposition in the scalar case is reviewed in Section  3. A proof of the main result, that is, the monogenic Fischer decomposition in the stable range, is then contained in Section  4.
In Section 5, we describe the structure of isotypic components for $\Spin(m)$ in the space of spinor valued polynomials.
 
 \section{Preliminaries and notations}
 \subsection{Clifford algebra and spinors}
 Consider the Euclidean space $\mR^m$ with a fixed
 negative definite quadratic form $B.$ 
 The corresponding (universal) Clifford algebra is denoted by
 $\mR_{0,m},$   its complexification by 
 $\cC_m=\mR_{0,m}\otimes\mC.$
  If $(e_1,\ldots,e_m)$ is an orthonormal basis
 for $\mR^m,$ its elements satisfy the relations
 $$
 e_ie_j+e_je_i=-2\delta_{ij},\ i\not=j,\ i,j=1,\ldots,m.
 $$
  Let $a\to\bar{a}$ denote the main antiinvolution on $\mR^m$
 characterized by the properties $\overline{e_i}=-e_i,\;i=1,\ldots,m$
 and $\overline{ab}=\bar{b}\bar{a},$ $a,b\in\mR_{0,m}.$
 The extension of the main antiinvolution to $\cC_m$ is defined by
 $$
 \overline{a\otimes\alpha}= 
 \overline{a}\otimes \overline{\alpha},
 \ a\in\mR_{0,m},\alpha\in\mC,
 $$
 where $\overline{\alpha}$ denotes the complex conjugation.

The Clifford algebra $\cC_m$ is $\mZ_2$-graded, it decomposes as
$\cC_m=\cC_m^+\oplus\cC_m^-$ into even and odd parts.
For $m$ odd, the even part $\cC^+_m$ is (isomorphic to) a matrix algebra over $\mC.$
 For $m$ even, the algebra $\cC_m$ is (isomorphic to) a matrix algebra over $\mC,$ and $\cC_m^+$ is a sum of two matrix algebras. 

Spinor valued fields considered in the paper have values in the
space $\mS$ defined as follows.
For  odd $m=2n+1$, we denote by $\mS$ the unique irreducible module
for $\cC_m^+,$ 
while  for even $m=2n$, we denote by $\mS$ the unique irreducible module
for $\cC_m.$
In both cases, the dimension of $\mS$  is equal to $2^n.$

The vector space $\mR^m$ is embedded into $\cC_m$ by
$$x:=(x_1,\ldots, x_m)\mapsto \ux:=\sum_{i=1}^m e_ix_i,$$ the Dirac operator
is denoted by 
$\upx:=\sum_{i=1}^m e_i\partial_{x_i}.$
See e.g.\ \cite{DSS} for a~detailed account.
 
 \subsection{Fischer inner product} 
We identify $(\mR^m)^k$ with the vector space of all $m\times k$ real matrices.
The columns $x_1,\ldots,x_k$ of a~matrix $x\in(\mR^m)^k$ are hence vectors in the Euclidean
space $\mR^m$, that is, $x_j=(x_{1j},\ldots,x_{mj})\in\mR^m$.
Denote by $\cP=\cP((\mR^m)^k)$ the space of all complex valued polynomials on 
$(\mR^m)^k$.
 Then a~polynomial $f\in\cP((\mR^m)^k)$ has a form
 $f=\sum_{\alpha} c_\alpha x^\alpha,$ where $\alpha=(\alpha_{ij})\in\mN_0^{m\times k}$ is a (matrix) multiindex,  
 $x^\alpha=\Pi_{ij}(x_{ij})^{\alpha_{ij}}$ and $c_\alpha\in\mC$ are non-zero for a~finite number of $\alpha\in\mN_0^{m\times k}$. 
 The standard  Fischer inner product   on the space $\cP((\mR^m)^k)$ 
 is given for $f=\sum_{\alpha} c_\alpha x^\alpha$ and $g=\sum_{\alpha} d_\alpha x^\alpha$
 by       $$\langle f,g\rangle=\sum_\alpha \alpha!\;\overline{c_{\alpha}}d_\alpha,$$
 where $\alpha!=\Pi_{ij}(\alpha_{ij}!).$
  It is a Hermitian (positive definite) inner product. It can also be written using an integral formula (\cite{G}, Lemma 9.1).
 
 The Fischer inner product  can be extended to polynomials with values
 in the Clifford algebra $\cC_m$ by the   formula
 $$
 \langle f,g\rangle=\sum_\alpha \alpha!\;[  \overline{c_{\alpha}}d_\alpha]_0
 $$
 where, for a Clifford number $c\in\cC_m$, $\overline{c}$ is the main antiinvolution of $c$
 and $[c]_0$ is its real part.
 
 The spinor representation $\mS$ can be realized as a suitable left ideal
 in $\cC_m,$ so the Fischer inner product is well defined also for
 spinor valued fields. See \cite{DSS}.

\subsection{A realization of $\lasp(2k)$}\label{sp2k} For an account of representation theory and Howe duality, we refer e.g. to \cite{GW}.
In the description  of the Howe dual pair $(\SO(m),\lasp(2k))$
the second partner is realized inside the Weyl algebra
 $\cD\cW$ of differential operators with polynomial coefficients
 as follows.
The group $\SO(m) $ acts on the space $(\mR^m)^k$ by the left matrix multiplication, hence it induces the  action on the space $\cP=\cP((\mR^m)^k)$ of all complex valued polynomials on 
$(\mR^m)^k$
by
$$
[g\cdot P](x):=P(g^t\,x),\;g\in\SO(m),\; P\in\cP,\; x\in(\mR^m)^k.
$$
We introduce the following differential operators on the space
$\cP:$
$$
r^2_{ij}:=\sum_{k=1}^m x_{ki} x_{kj},\ 
\Delta_{ij}:= \sum_{k=1}^m \partial_{x_{ki}} \partial_{x_{kj}},\ 
E_{ij}:=\sum_{k=1}^m x_{ki} \partial_{x_{kj}}
$$
and $h_{ij}:=E_{ij}+\frac{m}{2}\delta_{ij}$ for $i,j=1,\ldots,k$.
All these differential operators are elements of the Weyl algebra
$\cD\cW$ generated by the partial derivatives $\partial_{x_{kj}}$ 
and by the operators of multiplication by $x_{kj}$ 
acting on the space $\cP$ for $k=1,\ldots,m$, $j=1,\ldots,k$.

The action of the Lie algebra $\lat\simeq\lagl(k)$ on $\cP$ given by
$$
\lat:=\lspan\{h_{ij}|\ i,j=1,\ldots k\}
$$
extends to the action of the Lie algebra
$\lag'\simeq \lasp(2k),$
where 
$\lag'=\lap_-
\oplus \lat
\oplus \lap_+,
$
and
$$
\lap_+:=\lspan\{r^2_{ij}|\ 1\leq i\leq j\leq k\},\;
\lap_-:=\lspan\{\Delta_{ij}|\ 1\leq i\leq j\leq k\}.
$$
All operators in $\lasp(2k)$ are $\SO(m)$-invariant differential operators.
The Lie algebra $\lat$ splits moreover as
$$\lat=\lat_-\oplus\lat_0\oplus\lat_+\text{\ \ with\ \ }\lat_0=\lspan\{h_{ii}|\ 1\leq i\leq k \},$$
$$\lat_+=\lspan\{h_{ji}|\ 1\leq i<j\leq k \},\ \ \lat_-=\lspan\{h_{ij}|\ 1\leq i<j\leq k \}.$$

\subsection{Lie superalgebra $\laosp(1|2k)$}\label{ss_osp}

The Lie algebra $\lasp(2k)$ is the even part of the Lie superalgebra
$\laosp(1|2k).$ There is a realization of this superalgebra 
by differential operators acting on the space of\
 spinor valued polynomials
$\cP((\mR^m)^k)\otimes  \mS.$

The even part $\lasp(2k)$ of $\laosp(1|2k)$ is realized as in Subsection \ref{sp2k} above and its odd
part is $\laf_+\oplus\laf_-,$
where
$$
\laf_+:=\lspan\{\ux_i|\ i=1,\ldots,k \},
\; \laf_-:=\lspan\{{\upx}_i|\ i=1,\ldots,k \}.
$$
So we have
$\laosp(1|2k)=\laf_-\oplus\lap_-\oplus\lat\oplus\lap_+\oplus\laf_+$.

\subsection{Representations of $\Spin(m)$}
We always assume that $m>2$, and $m=2n$ or $m=2n+1$. Finite dimensional irreducible representations of the group
$\Spin(m)$, the double cover of $SO(m)$, are denoted in the paper by $E_\lambda, $ where
$\lambda=(\lambda_1,\ldots,\lambda_n)$ is the highest weight
satisfying usual dominant conditions depending on parity of $m.$
It is given by 
$\lambda_1\geq\ldots\geq\lambda_{n-1}\geq |\lambda_n|$ for
$m=2n,$ resp. $\lambda_1\geq\ldots\geq \lambda_n\geq 0$
for $m=2n+1.$
A representation $E_\lambda$ factorizes to a representation
of $\SO(m)$ if and only if its components are integral. In case that the components
$\lambda_i$ are all elements of $\mZ+\frac{1}{2},$ it is a genuine
module for $\Spin(m).$

The spinor representation 
$\mS$ is irreducible for $m$ odd and its highest weight
is $\mu=(\frac{1}{2},\ldots,\frac{1}{2}).$
For $m$ even, $\mS=\mS^+\oplus
\mS^-$ where $\mS^{\pm}$ are irreducible submodules with the highest weights
$\mu_\pm=(\frac{1}{2},\ldots,\frac{1}{2},\pm\frac{1}{2}),$ respectively.

All irreducible $\Spin(m)$-modules can be realized using the basic function spaces studied in Clifford analysis, as shown in \cite{CLS}.
To do this, we define the space $\cH$ of harmonic polynomials on $(\mR^m)^k$ as
$$\cH:=\Ker\lap_-=\{P\in\cP|\ LP=0\text{\ for all\ }L\in\lap_-\},$$  and 
the space of simplicial harmonics
$$\cH^S:=\cH\cap\Ker \lat_-$$ 
where $\lap_-$ and $\lat_-$ are given as in Subsection \ref{ss_osp}. 
For $\ell\in\mN_0$, denote by $\cP_{\ell}$ the space of polynomials of $\cP$ homogeneous of total degree $\ell$ and, for a~multiindex $a=(a_1,\dots,a_k)\in\mN_0^k$, $\cP_a$ stands for the subspace of polynomials of $\cP$ homogeneous in the variable $x_i$ of degree $a_i$ for each $i=1,\ldots,k$.  We use an analogous notation for other spaces of polynomials, e.g., $\cH_{\ell}$ and $\cH^S_a$. 
For a partition $a\in\mN_0^k$ (i.e., $a_1\geq a_2\geq \cdots\geq a_k$), we know that the space  $\cH^S_a$ of simplicial harmonic polynomials forms an irreducible $SO(m)$-module with the highest weight $a.$
Irreducible representations with half integral weights can be
realized in a similar way using suitable spaces of monogenic polynomials. More details are presented below in Subsection \ref{s_monog}.

The following result is needed in the proof of the main theorem.
\begin{lemma}\label{Klimyk}
	Suppose that	$2k\leq m$, and $m=2n$ or $m=2n+1.$
	Let $\Pi(\mS)$ be the list of all $2^n$ weights of the module $\mS.$ 	
	Then the tensor product $E_\lambda\otimes\mS$ decomposes with multiplicity one as
	$$
	E_\lambda\otimes\mS\simeq \bigoplus_{\nu\in A}E_\nu,
	$$
	where  $A$ is the set of all dominant weights $\nu$ for $\Spin(m)$ of the form $\nu=\lambda+\alpha$
	for some $\alpha\in \Pi(\mS)$.  
	
	The number of summands is bounded by $2^k$ and is equal to $2^k,$
	if $\lambda$ lies inside the dominant Weyl chamber.

\end{lemma}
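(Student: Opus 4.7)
The plan is to derive the lemma from the general behavior of tensor products with minuscule representations. For odd $m$ the spinor module $\mS$ is itself minuscule, and for even $m$ it splits as $\mS^+\oplus\mS^-$ with each half-spinor minuscule; in either case every weight of $\mS$ has multiplicity one and the Weyl group acts transitively on $\Pi(\mS)$ (respectively on $\Pi(\mS^\pm)$). For any minuscule $V$, a standard collapse of Klimyk's tensor-product formula, or equivalently Kostant's formula specialized to the minuscule case, gives
$$
E_\lambda\otimes V \;\simeq\; \bigoplus_{\substack{\alpha\in\Pi(V)\\ \lambda+\alpha\ \mathrm{dominant}}} E_{\lambda+\alpha},
$$
with each summand appearing exactly once. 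Qualitatively, the alternating Weyl-group sum in Klimyk's formula contributes only one surviving dominant-chamber term for each orbit representative, while non-dominant $\alpha$ either land on a wall (and are killed) or cancel against terms already counted. Applying this to $V=\mS$ yields the first part of the lemma.

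To bound $|A|$, I would analyze the dominance of $\nu=\lambda+\alpha$ coordinate by coordinate. The weights of $\mS$ are of the form $\alpha=(\epsilon_1/2,\ldots,\epsilon_n/2)$ with $\epsilon_i\in\{\pm1\}$, so dominance of $\nu$ reduces to a system of inequalities among the numbers $\lambda_i+\epsilon_i/2$. Under the stable-range hypothesis $k\leq n$, the coordinates of $\lambda$ beyond position $k$ are constrained enough that the tail dominance inequalities cascade upward and pin down all but (at most) the first $k$ of the signs. The remaining signs $\epsilon_1,\ldots,\epsilon_k$ can be chosen independently exactly when the dominance inequalities satisfied by $\lambda$ have enough slack to absorb a perturbation of magnitude $1/2$, which is precisely the condition that $\lambda$ lie in the interior of the dominant Weyl chamber. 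This yields at most $2^k$ admissible sign patterns, with equality in the interior case.

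The main obstacle I expect is the parity-dependent bookkeeping: for $m=2n$ the last dominance condition $\nu_{n-1}\geq|\nu_n|$ couples the last two signs differently from the uniform inequalities of the odd case, and the chiral decomposition $\mS=\mS^+\oplus\mS^-$ must be handled carefully so as not to miscount. Once those combinatorial details are settled in both parities the enumeration is routine, and the multiplicity-one statement is then immediate from the minuscule simplification of Klimyk's formula.
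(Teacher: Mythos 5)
Your approach is the same as the paper's: both arguments rest on Klimyk's formula (FH, Ex.\ 25.41) together with the observation that for the spinor module that formula collapses without cancellation. You phrase this via the minuscule property of $\mS$ (odd $m$) and $\mS^\pm$ (even $m$); the paper phrases it as the observation that $\lambda+\alpha+\rho$ is always inside or on the wall of the dominant Weyl chamber. These are equivalent routes to the same collapse.

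Two small imprecisions are worth flagging. First, your qualitative gloss says non-dominant terms ``either land on a wall (and are killed) or cancel against terms already counted.'' For a minuscule factor there are in fact \emph{no} cancellations: since $\lambda+\alpha+\rho$ is already in the closed dominant chamber for every $\alpha\in\Pi(\mS)$, each Klimyk term either survives (with the weight multiplicity, here $1$) or is killed on a wall, and nothing is reflected back. This is exactly what makes ``multiplicity one'' immediate; if you allowed for genuine cancellations you would still need a separate argument that the resulting multiplicities are $0$ or $1$. Second, you attribute the pinning down of the tail signs $\alpha_{k+1},\ldots$ to the stable-range hypothesis $k\leq n$. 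That hypothesis constrains $k$ and $m$, not $\lambda$. What is actually needed (and implicitly assumed in the paper, since the lemma is applied only to partitions $a\in\mN_0^k$) is that $\lambda_i=0$ for $i>k$: it is this vanishing of the tail of $\lambda$, not the numerical relation $k\leq n$, that forces $\alpha_{k+1}=\cdots$ to be $+\tfrac12$ via the cascading dominance inequalities. You correctly observe that the even-dimensional bookkeeping (the $|\nu_n|$ condition and the chiral split) needs care; the paper's own proof is also terse on exactly this point.
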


\begin{proof}
	Suppose that $\rho$ is the half sum of positive roots.	The claim follows immediately from Klimyk's formula 
	\cite[Ex.\ 25.41, p.\ 428]{FH}, because
	all weights $\alpha\in\Pi(\mS)$ have multiplicity one and
	the sum $\lambda+\alpha+\rho$ is either inside or on the wall of the
	dominant Weyl chamber.

	The first $k$ components of $\alpha\in\Pi(\mS)$ are equal
	to $\pm\frac{1}{2}$ and there are at most $2^k$ such possibilities,
	and if $\lambda+\alpha$ is dominant, all components $\alpha_i,
	i=k+1,\ldots,\ell-1$ have the sign plus.
	For a generic $\lambda$ all sums $\lambda+\alpha$ are dominant.
\end{proof}

\subsection{Representations of $\lagl(k)$}
Finite dimensional irreducible representations of the group
$\GL(k)$ are denoted in the paper by $F_\lambda, $ where
$\lambda=(\lambda_1,\ldots,\lambda_k)\in\mZ^k$ is the highest weight
satisfying usual dominant condition
$\lambda_1\geq\ldots\geq \lambda_k.$
For the Lie algebra $\lagl(k)$ of $k\times k$ matrices, there are more finite dimensional irreducible representations.
In fact, for each $\delta\in\mC$, we have 1-dimensional representation $F_{\delta 1_k}$ of $\lagl(k)$ given by
$$[g\cdot z]=\delta\trace(g)z,\ g\in\lagl(k), z\in\mC$$
where $\delta 1_k=(\delta,\ldots,\delta)$ ($k$ numbers) and $\trace(g)$ is the trace of a~matrix $g\in\lagl(k)$.
Then each finite dimensional irreducible representation of $\lagl(k)$ is of the form
$$F_{\lambda+\delta 1_k}=F_{\lambda}\otimes F_{\delta 1_k}$$ for some $\lambda=(\lambda_1,\ldots,\lambda_k)\in\mZ^k$, $\lambda_1\geq\ldots\geq \lambda_k$ and $\delta\in\mC$. In particular, we have $\dim F_{\lambda+\delta 1_k}=\dim F_{\lambda}$.

\section{Harmonic Fischer decomposition} 
In this section, we present a review of results from classical invariant theory and the theory of Howe dual pairs needed for the proof of the Fischer decomposition for spinor valued polynomials. Results presented in this section
are taken from \cite{Ho1,G,HTW}.

\subsection{Separation of variables}
The classical invariant theory describes  the set
 $\cI:=\cP^{\SO(m)}$ of invariant polynomials with respect
 to the action of the group $\SO(m).$
The theorem below 
 (called by tradition 'separation of variables') describes polynomials in the space  $\cP$ as products of invariant and harmonic polynomials. Recall that $\cH=\Ker\lap_-$ is the space of harmonic polynomials on $(\mR^m)^k$, that is, those annihilated by all the
 	differential operators $\Delta_{ij}$.
 
 \begin{thm}[{\cite[Theorem 3.3]{HTW}}] \label{t_hfd*}\	
  	
\noindent 
(i) The space $\cI=\cP^{\SO(m)}$ of invariant polynomials under the action of the group $SO(m)$ is the polynomial algebra $\mC[r^2_{ij}] $, and it is also isomorphic to the symmetric algebra $\cS(\lap_+)$ over the space $\lap_+$.
 
\noindent 
 (ii)	
 The linear map from $\cI\otimes\cH$ onto $\cP$ 
$$I\otimes H\mapsto IH,$$ given by the multiplication of polynomials $I\in\cI$ and $H\in\cH$,
 is an isomorphism if the stable range condition  $m\geq 2k$ holds. 
 \end{thm}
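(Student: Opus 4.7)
The plan is to prove (i) by Weyl's First Fundamental Theorem together with a Hilbert series count, and to prove (ii) by combining Fischer orthogonality with a freeness argument that uses the stable range crucially.

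For (i), the generation statement $\cP^{\SO(m)} = \mC[r^2_{ij}]$ is a corollary of the First Fundamental Theorem for the orthogonal group: $\cP^{O(m)}$ is generated by the Gram entries $r^2_{ij}=\langle x_i,x_j\rangle$, which one proves by polarization, reducing to the single-vector case where the invariants are just $\mC[|x|^2]$. The coincidence $\cP^{\SO(m)}=\cP^{O(m)}$ in the stable range holds because the extra $\SO(m)$-invariants would be built from $m\times m$ determinants of columns of the matrix $x$, requiring $k\geq m$ — impossible when $k\leq m/2$. For the identification with $\cS(\lap_+)$, the natural surjection $\cS(\lap_+)\twoheadrightarrow\mC[r^2_{ij}]$ is injective iff the $\binom{k+1}{2}$ generators are algebraically independent; I would prove this by computing the Hilbert series of $\cP^{O(m)}$ via the Weyl integration formula applied to $\cS((\mC^m)^k)$ and checking that in the stable range it equals $\prod_{i\leq j}(1-t_it_j)^{-1}$, the Hilbert series of a polynomial algebra in $\binom{k+1}{2}$ generators.

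For (ii), the Fischer inner product yields surjectivity: multiplication by $r^2_{ij}$ is a normalized adjoint of $\Delta_{ij}$, so the orthogonal complement of $\lap_+\cdot\cP$ inside $\cP$ is exactly $\cH = \ker\lap_-$. This gives the orthogonal splitting $\cP = \cH\oplus\lap_+\cdot\cP$ in each (finite-dimensional) homogeneous piece. Induction on total degree applied to the factors $P_{ij}$ in the expansion $Q=\sum_{i\leq j} r^2_{ij}P_{ij}$ of the $\lap_+\cdot\cP$-part of any $P=H+Q$ then yields $\cP=\cI\cdot\cH$, i.e., surjectivity of the multiplication map $\cI\otimes\cH\to\cP$.

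The injectivity of the multiplication map is the main obstacle. I would establish it by a Hilbert series comparison: in the stable range, the algebraic independence from (i) upgrades to the stronger statement that the $r^2_{ij}$ form a regular sequence on $\cP$, because the null-cone $\{r^2_{ij}=0\}\subset(\mC^m)^k$ has the expected codimension $\binom{k+1}{2}$ exactly when $m\geq 2k$. Since $\cP$ is Cohen--Macaulay, such a regular sequence makes $\cP$ a free graded module over $\cI=\mC[r^2_{ij}]$, and any lift of a basis of $\cP/\lap_+\cdot\cP\cong\cH$ becomes a free basis, giving $H_\cP(t) = H_\cI(t)\cdot H_\cH(t)$ and forcing the already-surjective graded map $\cI\otimes\cH\to\cP$ to be bijective in each homogeneous component. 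Equivalently, in the representation-theoretic language of the paper, one can invoke Howe duality for the dual pair $(\SO(m),\lasp(2k))$: the $\SO(m)$-isotypic pieces of $\cP$ are irreducible $\lasp(2k)$-lowest-weight modules whose lowest weight spaces are the corresponding isotypic pieces of $\cH$, and in the stable range these are parabolic Verma modules on which $\cS(\lap_+)=\cI$ acts freely. The sharpness of the stable range is visible in both formulations: the Second Fundamental Theorem cofactor syzygies among the $r^2_{ij}$ obstruct algebraic independence in (i) and freeness in (ii) as soon as $m<2k$.
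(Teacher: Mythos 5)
The paper does not give a proof of this theorem: it is quoted verbatim as \cite[Theorem 3.3]{HTW}, and the surrounding text treats both the generation of $\cI$ and the freeness of the multiplication map in the stable range as known facts from classical invariant theory and Howe duality. So there is no in-paper argument to compare against, and your proposal should be read as a sketch of a self-contained proof of a result the authors import.

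As such a sketch it is essentially correct and follows the standard route. Part (i): the appeal to the First Fundamental Theorem for $O(m)$, the observation that $\cP^{\SO(m)}=\cP^{O(m)}$ because $m\times m$ minors need $k\geq m$ columns (impossible when $m\geq 2k>k$), and the Hilbert-series verification of algebraic independence of the $\binom{k+1}{2}$ Gram entries are all sound; the multigraded Hilbert series $\prod_{i\leq j}(1-t_it_j)^{-1}$ in the stable range is exactly Littlewood's identity specialised to this setting. Part (ii): the Fischer-orthogonality argument for surjectivity, $\cP_\ell = \cH_\ell\oplus\sum_{i\leq j}r^2_{ij}\cP_{\ell-2}$ plus induction, is precisely the mechanism the paper itself reuses for the spinor-valued case (compare Lemma \ref{l_wmfd} and display \eqref{e_HarmDecomp}). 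For injectivity, your Cohen--Macaulay route works, with one small caveat worth making explicit: freeness of $\cP$ over $\cI=\mC[r^2_{ij}]$ does not follow from ``regular sequence'' alone but from the fact that in a CM graded ring a regular sequence extends to a homogeneous system of parameters, $\cP$ is free over the full hsop subalgebra (Hironaka), and freeness descends by transitivity to $\cI$. The alternative you mention---Howe duality realising the $\SO(m)$-isotypic pieces as parabolic Verma modules for $\lasp(2k)$ that are free over $\cS(\lap_+)$ in the stable range---is in fact closer in spirit to how \cite{HTW,G} obtain the result, and is the version the paper later mirrors in Theorem \ref{t_isod}. One minor imprecision: the null cone $\{X^{T}X=0\}\subset(\mC^m)^k$ already has the expected codimension $\binom{k+1}{2}$ for $m\geq 2k-1$, not ``exactly when $m\geq 2k$''---this is consistent with the paper's remark that the scalar Fischer decomposition likely persists one dimension below the stable range---but this does not affect the correctness of your proof for $m\geq 2k$.
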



Obviously, in the stable range, we can reformulate Theorem \ref{t_hfd*}  as follows.

\begin{thm}\label{t_hfd} 
	If $m\geq 2k$, we have 
	\begin{equation}\label{e_hfd}
	\cP((\mR^m)^k)=\bigoplus_{\{n_{ij}\}} \Big(\prod_{1\leq i\leq j\leq k}r_{ij}^{2n_{ij}}\Big) \cH
	\end{equation}
	where the sum is taken over all sequences $\{n_{ij}|\ 1\leq i\leq j\leq k\}$ of numbers of $\mN_0$.
\end{thm}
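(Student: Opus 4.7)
The plan is to deduce Theorem \ref{t_hfd} directly from Theorem \ref{t_hfd*} by unpacking the isomorphism $\cI\otimes\cH \cong \cP$ using an explicit basis of $\cI$. The first step uses part (i) of Theorem \ref{t_hfd*}: since $\cI$ is isomorphic to the symmetric algebra $\cS(\lap_+)$, and $\{r_{ij}^2 \mid 1 \leq i \leq j \leq k\}$ is by definition a basis of $\lap_+$, these generators are algebraically independent over $\mC$. Consequently the monomials $\prod_{1 \leq i \leq j \leq k} r_{ij}^{2n_{ij}}$, indexed by sequences $\{n_{ij}\}$ of nonnegative integers, form a vector space basis of $\cI$, yielding
$$\cI = \bigoplus_{\{n_{ij}\}} \mC \cdot \prod_{1 \leq i \leq j \leq k} r_{ij}^{2n_{ij}}.$$

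The second step applies part (ii) of Theorem \ref{t_hfd*}: in the stable range $m \geq 2k$, multiplication defines a vector space isomorphism $\cI \otimes \cH \to \cP$, $I \otimes H \mapsto IH$. Tensoring the above basis decomposition of $\cI$ with $\cH$ and transporting it through this isomorphism sends the summand $\mC \cdot \prod r_{ij}^{2n_{ij}} \otimes \cH$ to $\bigl(\prod r_{ij}^{2n_{ij}}\bigr)\cH$, which is precisely the right-hand side of \eqref{e_hfd}. The resulting sum is direct because it is the image, under a linear isomorphism, of a direct sum decomposition of $\cI\otimes\cH$.

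Since the input Theorem \ref{t_hfd*} supplies both the algebraic independence of the quadratic invariants and the freeness of $\cP$ as an $\cI$-module in the stable range, there is nothing further to prove; the entire argument is a straightforward rewriting. The only place where the stable range hypothesis $m \geq 2k$ is actually used is in invoking part (ii) of Theorem \ref{t_hfd*}, so no genuine obstacle arises at this stage; the real difficulty, deferred to the cited references, lies inside the proof of Theorem \ref{t_hfd*} itself.
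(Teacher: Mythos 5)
Your proposal is correct and takes exactly the approach the paper takes: the paper presents Theorem~\ref{t_hfd} as an immediate reformulation of Theorem~\ref{t_hfd*}, with no separate argument, and your write-up simply spells out the routine unpacking (algebraic independence of the $r_{ij}^2$ giving a monomial basis of $\cI$, then transporting $\cI\otimes\cH\cong\cP$ through that basis). Nothing is missing and nothing differs in substance.
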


\subsection{Decomposition of spherical harmonics}
The space $\cH$ of harmonic polynomials is invariant under the action
of the product $\SO(m)\times \lagl(k),$  and it decomposes into irreducible parts with multiplicity one as follows.
Here the Lie algebra $\lagl(k)$ is realized as $\lat$, see Subsection \ref{sp2k}.

\begin{thm}[{\cite[Theorems 10.1, 10.4]{G}}]\label{t_isodh}
The space $\cH$ of harmonic polynomials has a multiplicity free decomposition under the action of
$\SO(m)\times\lagl(k)$ 
$$
\cH=\bigoplus_a  \cH_{(a)}\text{\ \ with\ \ } \cH_{(a)}\simeq \cH^S_{a}\otimes F_{a+\frac{m}{2}1_k},
$$
where the sum is taken over all partitions  $a\in\mN_0^k$.
Individual summands $\cH_{(a)}$ are at the same time isotypic components
of the $\SO(m)$ action and the isotypic components under
the action of $\lat\simeq\lagl(k)$ in the space $\cH.$ 
\end{thm}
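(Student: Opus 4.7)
The plan is to deduce Theorem \ref{t_isodh} from Howe duality for the reductive dual pair $(\SO(m),\lasp(2k))$ acting on $\cP$, with the symplectic side realized as in Subsection \ref{sp2k}. First I would check that $\cH$ is preserved by the joint $\SO(m)\times\lagl(k)$ action: the $\SO(m)$-invariance of $\cH=\Ker\lap_-$ is immediate from the $\SO(m)$-invariance of each $\Delta_{ij}$, while the $\lat$-invariance follows because $\lat$ normalizes $\lap_-$ inside $\lasp(2k)$, so that for any $X\in\lat$ and $L\in\lap_-$ and any $P\in\cH$ one has $L(XP)=[L,X]P+XLP=0$.

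Next I would apply Howe duality. In the stable range $m\geq 2k$ the space $\cP$ decomposes with multiplicity one as
\begin{equation*}
\cP=\bigoplus_{a}V_a\otimes L(a),
\end{equation*}
summed over partitions $a\in\mN_0^k$, where $V_a$ is the irreducible $\SO(m)$-module of highest weight $a$ (padded with zeros) and $L(a)$ is an irreducible lowest weight $\lasp(2k)$-module. Its lowest weight subspace $L(a)^{\lap_-}$ is an irreducible $\lagl(k)$-module, and the normalization $h_{ii}=E_{ii}+\tfrac{m}{2}$ shows that on polynomials of multidegree $a$ the $\lat_0$-weight is $a+\tfrac{m}{2}1_k$, whence $L(a)^{\lap_-}\simeq F_{a+\frac{m}{2}1_k}$. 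Intersecting the Howe decomposition with $\Ker\lap_-=\cH$ then yields
\begin{equation*}
\cH=\bigoplus_a V_a\otimes F_{a+\frac{m}{2}1_k}.
\end{equation*}

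It remains to identify $V_a$ with the simplicial harmonic space $\cH^S_a$ as $\SO(m)$-modules. For this I would observe that inside each Howe summand $V_a\otimes F_{a+\frac{m}{2}1_k}$, the subspace $\cH^S_a=\cH\cap\Ker\lat_-\cap\cP_a$ picks out $V_a$ tensored with the one-dimensional $\lat$-highest weight line of $F_{a+\frac{m}{2}1_k}$, the line being one-dimensional because $\cP_a$ fixes the $\lat_0$-weight to $a+\tfrac{m}{2}1_k$ and $\Ker\lat_-$ further cuts down to the highest weight space. Hence $\cH^S_a\simeq V_a$, and the summands $\cH_{(a)}=V_a\otimes F_{a+\frac{m}{2}1_k}$ are simultaneously isotypic components of both factors by multiplicity freeness. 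The main obstacle is the precise identification of the lowest weight of $L(a)$ with $a+\tfrac{m}{2}1_k$ under Howe's correspondence, which requires constructing an explicit joint $\SO(m)$-highest/$\lat$-lowest weight vector in $\cH$ and checking it generates $L(a)$; this is where the stable range hypothesis $m\geq 2k$ enters essentially, as it rules out any $\lasp(2k)$-submodule reducibility that would spoil the correspondence.
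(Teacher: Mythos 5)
The paper does not supply a proof of Theorem \ref{t_isodh}: it is stated as a direct citation of Goodman's Theorems 10.1 and 10.4, exactly as is the Howe duality theorem for $\cP((\mR^m)^k)$ that appears two subsections later. Your proposal derives the harmonic decomposition from that $\cP$-level Howe duality, and the deduction is sound in outline. Intersecting $\cP\simeq\bigoplus_a\cH^S_a\otimes L_{a+\frac m2 1_k}$ with $\Ker\lap_-$ replaces each $L_{a+\frac m2 1_k}$ by its $\lap_-$-invariant subspace, which for an irreducible lowest weight $\lasp(2k)$-module is the irreducible $\lagl(k)$-module $F_{a+\frac m2 1_k}$: a nonzero proper $\lat$-submodule $W\subset L^{\lap_-}$ avoiding the lowest weight line would generate a proper nonzero $\lasp(2k)$-submodule $\cU(\lap_+)W$, contradicting irreducibility. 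Your identification of the $\SO(m)$-partner via the one-dimensional $\lat$-singular line in multidegree $a$ is likewise the right device.

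Two caveats are worth spelling out. First, since both statements are imports from the same source, your argument shows they are equivalent rather than furnishing an independent proof; indeed in Goodman's development the decomposition of $\cH$ is typically established first (from separation of variables together with exhibiting the simplicial harmonics $\cH^S_a$ as joint singular vectors), and Howe duality for $\cP$ then follows from $\cP\simeq\cS(\lap_+)\otimes\cH$ by identifying each $\lasp(2k)$-summand as a generalized Verma module, irreducible in the stable range. Running the implication in your direction therefore risks circularity unless one has an independent handle on the $\cP$-duality, say via Howe's abstract double-commutant argument. Second, you rightly flag the remaining obstacle — pinning down the $\lat$-lowest weight of $L(a)$ as $a+\frac m2 1_k$ and matching the $\SO(m)$-partner with $\cH^S_a$ rather than an abstract $V_a$ — but that identification is precisely the content of Goodman's proof (and note that the form of Howe duality quoted in the paper already asserts the partner is $\cH^S_a$, so your use of an unlabelled $V_a$ takes weaker input than is available). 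In short: correct in outline, but it relocates rather than removes the difficulty, and it proves a cited fact from another cited fact.
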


 \subsection{The Howe duality $\SO(m)\times\lasp(2k)$}

It is well-known that, under the joint action of 
$\SO(m)$ and $\lasp(2k)$, the space
$\cP((\mR^m)^k)$ of scalar valued polynomials has a multiplicity free decomposition. Indeed, we have

\begin{thm}[{\cite[Theorems 10.1, 10.4]{G}}]
	Assume  $m\geq 2k$.
	Then the space $\cP((\mR^m)^k)$ decomposes under the action of 
	the dual pair  $\SO(m)\times\lasp(2k)$ as
	$$
	\cP((\mR^m)^k)\simeq\bigoplus_a \cH^S_a\otimes {L}_{a+\frac m2 1_k},
	$$ 
	where the sum is taken over all partitions $a\in \mN_0^k$ and $L_{\lambda}$ is an irreducible lowest weight module with lowest weight $\lambda$ for $\lasp(2k)$.	  
\end{thm}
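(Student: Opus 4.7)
The plan is to upgrade the $\SO(m)\times\lat$-decomposition supplied by Theorems \ref{t_hfd*} and \ref{t_isodh} to an $\SO(m)\times\lasp(2k)$-decomposition, exploiting the triangular structure $\lasp(2k)=\lap_-\oplus\lat\oplus\lap_+$ together with the identification $\cI=\cS(\lap_+)=U(\lap_+)$ (valid because $\lap_+$ is abelian). Combining those two input theorems already gives the vector space decomposition
$$\cP=\bigoplus_a \cI\cdot\cH_{(a)},\qquad \cH_{(a)}=U(\lat)\cdot\cH^S_a\simeq\cH^S_a\otimes F_{a+\frac{m}{2}1_k},$$
with the sum running over partitions $a\in\mN_0^k$. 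The remaining task is to identify each summand, as an $\SO(m)\times\lasp(2k)$-module, with $\cH^S_a\otimes L_{a+\frac{m}{2}1_k}$.

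The central observation is that $\cH^S_a$ is a space of $\lasp(2k)$-lowest weight vectors of weight $a+\frac{m}{2}1_k$: its elements are annihilated by $\lap_-$ (harmonicity) and by $\lat_-$ (simpliciality), and any $P\in\cH^S_a$ satisfies $h_{ii}P=(E_{ii}+\frac{m}{2})P=(a_i+\frac{m}{2})P$ because $P$ is homogeneous of degree $a_i$ in the $i$-th vector variable. The PBW decomposition $U(\lasp(2k))=U(\lap_+)\cdot U(\lat)\cdot U(\lap_-)$ together with Theorem \ref{t_isodh} then gives
$$U(\lasp(2k))\cdot\cH^S_a=U(\lap_+)\cdot U(\lat)\cdot\cH^S_a=\cI\cdot\cH_{(a)},$$
so each summand is $\lasp(2k)$-generated by the lowest weight subspace $\cH^S_a$.

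Irreducibility of $\cI\cdot\cH_{(a)}$ as a $\lasp(2k)$-module follows by a lowest-weight / commuting-actions argument. If $N\subset\cI\cdot\cH_{(a)}$ is a nonzero $\lasp(2k)$-submodule, then an element of minimal total degree in $N$ must be annihilated by $\lap_-$, so $N\cap\cH\neq 0$; the $\lat$-stable subspace $N\cap\cH$ contains nonzero $\lat_-$-lowest weight vectors, hence $N\cap\cH^S\neq 0$. The injectivity half of Theorem \ref{t_hfd*}(ii), which is precisely where the stable range hypothesis $m\geq 2k$ enters, forces $(\cI\cdot\cH_{(a)})\cap\cH=\cH_{(a)}$ and consequently $(\cI\cdot\cH_{(a)})\cap\cH^S=\cH^S_a$. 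Thus $N\cap\cH^S_a$ is a nonzero $\SO(m)$-submodule of the $\SO(m)$-irreducible $\cH^S_a$, so $\cH^S_a\subset N$ and therefore $N=U(\lasp(2k))\cdot\cH^S_a=\cI\cdot\cH_{(a)}$. Since $\SO(m)$ and $\lasp(2k)$ act by commuting operators ($\lasp(2k)$ being built from $\SO(m)$-invariant differential operators) and $\cH^S_a$ appears as the entire $\SO(m)$-isotypic piece of $\cI\cdot\cH_{(a)}$, this irreducibility forces $\cI\cdot\cH_{(a)}\simeq\cH^S_a\otimes L_{a+\frac{m}{2}1_k}$, completing the decomposition. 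The main technical step is the pinpointing $(\cI\cdot\cH_{(a)})\cap\cH^S=\cH^S_a$; this is the only place where the stable range hypothesis is used, and everything else is driven by PBW and the commuting-actions principle.
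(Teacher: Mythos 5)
The paper does not prove this statement at all---it is quoted verbatim from Goodman \cite[Theorems 10.1, 10.4]{G}---so your proof cannot be ``compared against the paper's proof''; it is a self-contained derivation from Theorems~\ref{t_hfd*} and~\ref{t_isodh}, which is a legitimate and worthwhile thing to attempt. The skeleton is right: the two cited theorems give $\cP=\bigoplus_a\cI\cdot\cH_{(a)}$, the space $\cH^S_a$ is indeed the $\lasp(2k)$-lowest weight space of weight $a+\frac m2 1_k$ (killed by $\lap_-$ and $\lat_-$, eigenspace for $\lat_0$), and the PBW identity $U(\lasp(2k))\cdot\cH^S_a=\cS(\lap_+)\cdot U(\lat_+)\cdot\cH^S_a=\cI\cdot\cH_{(a)}$ is a clean way to see that each summand is the cyclic module generated by its lowest weight space. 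The minimal-degree trick and the use of the injectivity of $\cI\otimes\cH\to\cP$ to pin down $(\cI\cdot\cH_{(a)})\cap\cH=\cH_{(a)}$ are both correct, and this is indeed exactly where the stable range is used.

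The gap is in the irreducibility step. You take $N$ to be ``a nonzero $\lasp(2k)$-submodule'' but then assert that $N\cap\cH^S_a$ is ``a nonzero $\SO(m)$-submodule of $\cH^S_a$.'' That is false: an $\lasp(2k)$-submodule is not $\SO(m)$-stable, so its intersection with $\cH^S_a$ is just a subspace. Indeed, for $\dim\cH^S_a>1$ the cyclic module $U(\lasp(2k))\cdot v$ generated by a single $v\in\cH^S_a$ is a proper nonzero $\lasp(2k)$-submodule of $\cI\cdot\cH_{(a)}$ with $N\cap\cH^S_a=\mC v\neq\cH^S_a$, so $\cI\cdot\cH_{(a)}$ is certainly \emph{not} irreducible as an $\lasp(2k)$-module (it is $\dim\cH^S_a$ copies of $L_{a+\frac m2 1_k}$). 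The argument is salvaged by replacing ``$\lasp(2k)$-submodule'' with ``$\SO(m)\times\lasp(2k)$-submodule'' throughout: then $N\cap\cH^S_a$ is $\SO(m)$-stable and your chain of reasoning correctly shows that $\cI\cdot\cH_{(a)}$ is irreducible under the \emph{joint} action. From that joint irreducibility one still has to extract that the multiplicity space $L:=\operatorname{Hom}_{\SO(m)}(\cH^S_a,\cI\cdot\cH_{(a)})$ is an irreducible lowest weight $\lasp(2k)$-module; this follows from the standard factorization of an irreducible, locally $\SO(m)$-finite, $\SO(m)$-isotypic module under commuting actions, which you gesture at in the last sentence but should state as the explicit final step rather than folding it into the (incorrectly phrased) irreducibility claim.
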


\section{Monogenic Fischer decomposition} 

In this section, we prove Theorem \ref{t_mfd}, 
the Fischer decomposition for spinor valued polynomials in $k$ vector variables of $\mR^m$.
In what follows, we consider only the stable range case
$m\geq 2k.$

\subsection{Radial algebra} 

 The vector variables $\ux_j=\sum_{k=1}^{m}e_kx_{kj}$ 
 are elements in the algebra $\cA:=\cP((\mR^m)^k)\otimes\End(\mS)$ of $\End(\mS)$-valued polynomials. They are invariant
 with respect to the action of the group $\Spin(m).$
Let $\cJ$
be the subalgebra generated by $\{\ux_1,\ldots,\ux_k\}$ in $\cA.$
By \cite{Som1}, $\cJ$ is a realization
of the  (abstract) radial algebra $\cR(\ux_1,\ldots,\ux_k)$ in the vector variables $\ux_1,\ldots,\ux_k$.
It is easy to see that
$$\cJ\simeq\cS(\lap_+)\otimes\largewedge(\laf_+)$$ 
where 
$\cS(\lap_+)$ is the symmetric algebra over the space $\lap_+$ and
$\largewedge(\laf_+)$ is the exterior algebra over $\laf_+$. Actually, $\cJ$ may be also viewed as the symmetric superalgebra over the superspace $V=V_0\oplus V_1$ with $V_0=\lap_+$ and $V_1=\laf_+$.


\subsection{Decomposition of  spherical monogenics}\label{s_monog}
Before proving Theorem \ref{t_mfd} we describe an irreducible decomposition of monogenic polynomials with respect to the group $\Spin(m)$ in Theorem \ref{t_idm} below. 
Recall that a~polynomial $P:(\mR^m)^k\to\mS$ is called monogenic (i.e., $P\in\cM$) if it satisfies the Dirac equations 
$$\partial_{\ux_1} P=0,\ldots, \partial_{\ux_k} P=0.$$
We say that such a~$P$ is simplicial monogenic if it holds in addition that
$$h_{ij}\; P=0$$ for each $1\leq i<j\leq k$. 
Here the operators $h_{ij}$ are defined in Subsection \ref{sp2k}.
Then, for the space $\cM^S$ of simplicial monogenics, we have
	$$\cM^S=\cM\cap\Ker \lat_-.$$ 
It is easy to see that 
$\cM^S$ decomposes by homogeneity as
\begin{equation}\label{e_dsm}
	\cM^S:=\bigoplus_{a}\cM^S_a
\end{equation}
	where the sum is taken over all partitions $a\in \mN_0^k$. In addition,
the space $\cM^S_a$ is an irreducible $\Spin(m)$-module with the highest weight 
$a' =
(a_1+\frac 12,\ldots,a_k+\frac 12,\frac 12,\ldots, \frac 12)$ ($n$\ numbers) in odd dimension $m=2n+1,$ while in even dimension $m=2n,$ it decomposes
into two irreducible components
$\cM^S_a=   \cM^{S,+}_a\oplus \cM^{S,-}_a$
with the highest weights
$a'_{\pm} =
(a_1+\frac 12,\ldots,a_k+\frac 12,\frac 12,\ldots, \frac 12,\pm\frac 12)$ ($n$\ numbers). 
Here $\cM^{S,\pm}_a=\cM^{S}_a\cap(\cP\otimes\mS^{\pm})$.
See \cite{CLS} for details.

\begin{lemma}\label{l_idhs}
For a~partition $a\in\mN_0^k$, the following $Spin(m)$-modules are isomorphic 
$$\cH^S_a\otimes\mS\simeq\bigoplus_J \cM^S_{a-\epsilon(J)}$$
where the direct sum is taken over all sets $J\subset\{1,2,\ldots,k\}$, and 
$\epsilon(J)=(\epsilon_1,\ldots,\epsilon_k)$ with
$\epsilon_j=1$ for $j\in J$ and
$\epsilon_j=0$ for $j\not\in J$. Here $\cM^S_{b}=0$ unless $b\in\mN_0^k$ is a~partition.
\end{lemma}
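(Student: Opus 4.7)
The plan is to apply Klimyk's formula (Lemma \ref{Klimyk}) to the irreducible $\Spin(m)$-module $\cH^S_a\simeq E_a$, of highest weight $(a_1,\ldots,a_k,0,\ldots,0)$ padded by zeros to $n$ components, tensored with $\mS$, and then to match each dominant summand of $E_a\otimes\mS$ with the highest weight of an appropriate $\cM^S_{a-\epsilon(J)}$ described in Subsection \ref{s_monog}.

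First I will enumerate the weights $\alpha=(\alpha_1,\ldots,\alpha_n)\in\Pi(\mS)$, each $\alpha_i\in\{\pm\tfrac12\}$, for which $a+\alpha$ is dominant. In the positions $i>k$ where $a_i=0$, dominance forces $\alpha_{k+1}=\cdots=\alpha_{n-1}=+\tfrac12$, and in addition $\alpha_n=+\tfrac12$ in odd dimension, whereas in even dimension $\alpha_n$ may take either sign. In the first $k$ slots I will set $\epsilon_i=\tfrac12-\alpha_i\in\{0,1\}$ and $J=\{i:\epsilon_i=1\}$; the remaining dominance inequalities $a_i+\alpha_i\geq a_{i+1}+\alpha_{i+1}$ for $i<k$ and $a_k+\alpha_k\geq+\tfrac12$ then translate exactly into the condition that $a-\epsilon(J)$ be a partition in $\mN_0^k$.

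Next I will match each admissible $a+\alpha$ with the highest weight of a simplicial monogenic space. With the above substitution one checks $a+\alpha=(a-\epsilon(J))'$: in odd $m$ this is the highest weight of the irreducible $\cM^S_{a-\epsilon(J)}$, and in even $m$ the two allowed values of $\alpha_n=\pm\tfrac12$ give the two half-spinor pieces $\cM^{S,\pm}_{a-\epsilon(J)}$, which together are $\cM^S_{a-\epsilon(J)}$. Summing over all subsets $J\subset\{1,\ldots,k\}$ (and keeping in mind the convention $\cM^S_b=0$ for $b$ outside the partition cone) will produce the claimed isomorphism.

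The main subtlety will be the uniform bookkeeping across odd and even $m$: the free sign of $\alpha_n$ in even dimension must be paired correctly with the half-spinor splitting of $\cM^S_{a-\epsilon(J)}$ so as to yield a single, parity-independent identity. Once this matching is established, the lemma follows immediately from Klimyk's formula together with the known highest weights of the simplicial monogenic spaces recalled in Subsection \ref{s_monog}.
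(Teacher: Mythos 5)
Your proposal is correct and follows essentially the same route as the paper: both invoke Klimyk's formula (Lemma~\ref{Klimyk}) applied to $\cH^S_a\simeq E_a$ tensored with $\mS$, match the dominant weights $a+\alpha$ with the highest weights $(a-\epsilon(J))'$ of $\cM^S_{a-\epsilon(J)}$ (resp.\ $(a-\epsilon(J))'_\pm$ of $\cM^{S,\pm}_{a-\epsilon(J)}$ in even dimension), and use the splittings $\mS=\mS^+\oplus\mS^-$, $\cM^S_a=\cM^{S,+}_a\oplus\cM^{S,-}_a$. You have simply spelled out the bookkeeping that the paper leaves implicit in its one-line proof.
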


\begin{proof} This follows directly from Klimyk's formula, see Lemma \ref{Klimyk}. 
In the even dimension $m$, we use in addition the decompositions
$\mS=\mS^+\oplus\mS^-$ and 
$\cM^S_a=   \cM^{S,+}_a\oplus \cM^{S,-}_a$.\end{proof}

\begin{lemma}\label{l_odm} Let $\ell\in\mN_0$.
(i) Then we have
$$\cM_{\ell}=\cM^S_{\ell}\oplus\sum_{1\leq i<j\leq k} h_{ji}\cM_{\ell}.$$
Here the sum in the second term on the right-hand side is not necessarily direct. 

\noindent
(ii) In particular, we have $\cM_{\ell}=\cU(\lat_+)\cM^S_{\ell}$
where $\cU(\lat_+)$ is the universal enveloping algebra of $\lat_+$.
\end{lemma}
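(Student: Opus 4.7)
The plan is to exhibit $\cM_{\ell}$ as a finite-dimensional $\lagl(k)$-module (via the realization $\lagl(k)\simeq\lat$) and then invoke standard highest-weight theory. The first step is the commutator computation
$$[h_{ij},\partial_{\ux_l}]=[E_{ij},\partial_{\ux_l}]=-\delta_{il}\,\partial_{\ux_j},$$
which follows directly from $E_{ij}=\sum_{k}x_{ki}\partial_{x_{kj}}$ and $\partial_{\ux_l}=\sum_p e_p\partial_{x_{pl}}$ (the Clifford generators $e_p$ commute with everything in sight). From this identity one sees that if $P\in\cM$, then $\partial_{\ux_l}(h_{ij}P)=\delta_{il}\partial_{\ux_j}P=0$, so $h_{ij}$ preserves $\cM$; since $h_{ij}$ clearly preserves total degree, $\cM_{\ell}$ is stable under all of $\lat$.

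Next, I would observe that $\cM_{\ell}$, being a finite-dimensional module over the reductive Lie algebra $\lagl(k)$ (equivalently, a submodule of the completely reducible $\lat$-module $\cP_{\ell}\otimes\mS$), decomposes into irreducible $\lagl(k)$-summands $L_{\lambda}$. In each $L_{\lambda}$, the highest weight space is one-dimensional and is precisely $L_\lambda\cap\Ker\lat_-$ (recall that in the paper's convention $\lat_-$ consists of the raising operators $h_{ij}$ with $i<j$, while $\lat_+$ consists of the lowering operators $h_{ji}$). Because $\lat_+$ strictly lowers the weight, the non-highest weight spaces of $L_{\lambda}$ lie in $\lat_+L_{\lambda}$, giving the vector-space direct sum decomposition
$$L_{\lambda}=(L_{\lambda}\cap\Ker\lat_-)\;\oplus\;\lat_+L_{\lambda}.$$
Summing over the irreducible constituents and using $\cM^S_{\ell}=\cM_{\ell}\cap\Ker\lat_-$ and $\lat_+=\lspan\{h_{ji}\mid 1\le i<j\le k\}$ yields assertion (i):
$$\cM_{\ell}=\cM^S_{\ell}\;\oplus\;\sum_{1\le i<j\le k}h_{ji}\cM_{\ell}.$$

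Finally, (ii) follows either by iterating (i) — since $\lat_+$ is nilpotent and $\cM_{\ell}$ is finite-dimensional, the tower $\cM_{\ell}=\cM^S_{\ell}+\lat_+\cM^S_{\ell}+\lat_+^2\cM^S_{\ell}+\cdots$ terminates and equals $\cU(\lat_+)\cM^S_{\ell}$ — or more directly by observing that each irreducible $\lagl(k)$-summand is generated from its highest weight vector by $\cU(\lat_+)$. Since the whole argument reduces to bookkeeping, there is no real obstacle; the only point that demands attention is the commutator identity ensuring $\cM_{\ell}$ is $\lat$-stable, together with the (slightly nonstandard) sign convention making $\lat_-$ the raising and $\lat_+$ the lowering subalgebra.
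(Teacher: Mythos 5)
Your proof is correct, and it reaches the conclusion by a genuinely different route than the paper. The paper argues via the Fischer inner product: since the Fischer adjoint of $h_{ji}$ is $h_{ij}$, inside the finite-dimensional space $\cM_{\ell}$ one gets $\bigl(\sum_{i<j}h_{ji}\cM_{\ell}\bigr)^{\perp}=\cM_{\ell}\cap\Ker\lat_-=\cM^S_{\ell}$, and the orthogonal decomposition yields (i) directly; (ii) then follows by iteration. You instead decompose $\cM_{\ell}$ into irreducible $\lagl(k)$-modules via complete reducibility and read off the splitting $L_{\lambda}=(L_{\lambda}\cap\Ker\lat_-)\oplus\lat_+L_{\lambda}$ weight space by weight space. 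Both are standard mechanisms leading to the same decomposition, and both silently require that $\cM$ is $\lat$-stable; you verify this explicitly with the commutator identity $[h_{ij},\partial_{\ux_l}]=-\delta_{il}\partial_{\ux_j}$, which is a worthwhile addition the paper leaves implicit. The paper's inner-product argument is more in the spirit of the rest of Section 4 (the same Fischer-orthogonality device is used in Lemma~\ref{l_wmfd} and in establishing~\eqref{e_HarmDecomp}), whereas your argument foreshadows the representation-theoretic structure that is made precise later in Theorem~\ref{t_idm}; either is acceptable.
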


\begin{proof} With respect to the Fischer inner product, we have inside $\cM_{\ell}$ that
$$(\sum_{1\leq i<j\leq k} h_{ji}\cM_{\ell})^{\perp}=\cM^S_{\ell},$$
which implies (i).

	By repeated application of the claim (i), we get (ii). 
\end{proof}

\begin{thm}\label{t_idm} 
Under the action of $\Spin(m)\times \lagl(k)$, we get a~decomposition of monogenic polynomials
\begin{equation}\label{idm}
\cM=\bigoplus_a\cM_{(a)}\text{\ \ with\ \ }\cM_{(a)}\simeq\cM^S_{a}\otimes F_{a+\frac m2 1_k}
\end{equation}
where the direct sum is taken over all partitions $a\in\mN_0^k$. The decomposition is irreducible in the odd dimension $m$. In the even dimension $m$, we have  $\cM_{(a)}=\cM^+_{(a)}\oplus\cM^-_{(a)}$ with $\cM^{\pm}_{(a)}\simeq\cM^{S,\pm}_{a}\otimes F_{a+\frac m2 1_k}$. 
\end{thm}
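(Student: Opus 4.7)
The plan is to define $\cM_{(a)}:=\cU(\lat_+)\cM^S_a$ for each partition $a\in\mN_0^k$, establish $\cM=\bigoplus_a\cM_{(a)}$, and identify $\cM_{(a)}\simeq\cM^S_a\otimes F_{a+\frac{m}{2}1_k}$ as $\Spin(m)\times\lagl(k)$-modules. Since the operators in $\lat$ are $\SO(m)$-invariant, they commute with the $\Spin(m)$-action, so $\cM$ is a $\Spin(m)\times\lagl(k)$-module and each $\cM_{(a)}$ is a submodule.

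For the first step, Lemma \ref{l_odm}(ii) applied to each homogeneous piece, together with the bidegree decomposition $\cM^S=\bigoplus_a\cM^S_a$ from (\ref{e_dsm}), yields $\cM=\sum_a\cM_{(a)}$, where the sum ranges over partitions $a\in\mN_0^k$. Directness follows from a $\Spin(m)$-isotypic argument: because $\cU(\lat_+)$ commutes with $\Spin(m)$ and $\cM^S_a$ is $\Spin(m)$-isotypic of type $a'=(a_1+\frac{1}{2},\ldots,a_k+\frac{1}{2},\frac{1}{2},\ldots,\frac{1}{2})$ (or splits into two such types in even $m$), the submodule $\cM_{(a)}$ is contained in the isotypic component for $a'$, and distinct partitions give distinct highest weights.

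For the identification, note that $\cM^S_a$ consists precisely of the $\lat$-highest weight vectors in $\cM$ of $\lat_0$-weight $a+\frac{m}{2}1_k$: bidegree $a$ fixes the $\lat_0$-weight while $\Ker\lat_-$ records annihilation by the raising operators. For any nonzero $w\in\cM^S_a$, the $\lagl(k)$-module generated by $w$ is finite-dimensional (being contained in the finite-dimensional total-degree $|a|$ piece of $\cP\otimes\mS$); since $a+\frac{m}{2}1_k$ is dominant and $\lagl(k)$ is reductive, complete reducibility forces this cyclic module to be the irreducible $F_{a+\frac{m}{2}1_k}$. The natural $\Spin(m)\times\lagl(k)$-equivariant map $\cM^S_a\otimes F_{a+\frac{m}{2}1_k}\to\cM_{(a)}$ sending $w\otimes v_0\mapsto w$ (with $v_0$ the highest weight vector of $F_{a+\frac{m}{2}1_k}$), extended $\lat$-equivariantly, is thus well-defined and surjective by definition of $\cM_{(a)}$. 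In odd $m$ the source is irreducible as a tensor product of irreducibles of commuting algebras, so the map is an isomorphism. In even $m$, $\cM^S_a$ decomposes into $\Spin(m)$-irreducibles $\cM^{S,\pm}_a$ sitting in $\cP\otimes\mS^\pm$; since $\cU(\lat_+)$ preserves $\cP\otimes\mS^\pm$, the same argument applied separately to each summand yields $\cM^\pm_{(a)}\simeq\cM^{S,\pm}_a\otimes F_{a+\frac{m}{2}1_k}$ and $\cM_{(a)}=\cM^+_{(a)}\oplus\cM^-_{(a)}$.

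The main technical point I anticipate is identifying the cyclic $\lagl(k)$-module $\cU(\lat)w$ with the full irreducible $F_{a+\frac{m}{2}1_k}$ rather than a proper quotient; this rests on dominance of the weight $a+\frac{m}{2}1_k$ (from $a$ being a partition) together with complete reducibility of finite-dimensional $\lagl(k)$-modules.
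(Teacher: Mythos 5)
Your proposal is correct and follows essentially the same route as the paper's proof: both set $\cM_{(a)}:=\cU(\lat_+)\cM^S_a$, deduce $\cM=\bigoplus_a\cM_{(a)}$ from Lemma \ref{l_odm} and \eqref{e_dsm}, and then identify $\cM_{(a)}\simeq\cM^S_a\otimes F_{a+\frac{m}{2}1_k}$ by arguing that the $\Spin(m)\times\lagl(k)$-module $\cM_{(a)}$ is irreducible with a joint singular vector of the right weight. You spell out what the paper compresses into ``it is easy to see'': directness via the $\Spin(m)$-isotypic components labelled by the distinct weights $a'$, and the fact that the cyclic $\lagl(k)$-module generated by a $\lat$-highest weight vector of dominant weight $a+\frac{m}{2}1_k$ inside a completely reducible finite-dimensional space must be the irreducible $F_{a+\frac{m}{2}1_k}$ (rather than a larger quotient of the Verma module). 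Same proof, more detail.
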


\begin{proof}
	
	For simplicity, we limit ourselves to the odd dimension $m$.
	The even dimensional case is proved in quite a~similar way.
	
	By Lemma \ref{l_odm} and \eqref{e_dsm}, it is clear that we have
	$$\cM=\bigoplus_a\cM_{(a)}$$
	where we put
	$\cM_{(a)}:=\cU(\lat_+)\cM^S_{a}.$ Obviously, for a~given partition $a\in\mN_0^k$,
	$\cM_{(a)}$ is a~representation of $\Spin(m)\times \lagl(k)$. Let $M_a$ be a~highest weight vector of the irreducible $\Spin(m)$-module $\cM^S_{a}$ of weight $a'$. Then, under the action of $\lat\simeq\lagl(k)$, $M_a$ is also a~singular vector in $\cM_{(a)}$ (i.e., $\lat_-\cdot M_a=0$) of weight $a+\frac m2 1_k$. Actually, it is easy to see that, under the joint action of $\Spin(m)\times \lagl(k)$, $M_a$ is a~unique (up to non-zero multiple) singular vector in $\cM_{(a)}$. In other words, $\Spin(m)\times \lagl(k)$-module $\cM_{(a)}$ is irreducible and  
$$\cM_{(a)}\simeq\cM^S_{a}\otimes F_{a+\frac m2 1_k},$$
which completes the proof.		
\end{proof}

\subsection{A proof of Theorem \ref{t_mfd}}

To prove the main result of the paper we need some auxiliary results. 

\begin{lemma}\label{l_wmfd}
	We have
	$$\cP\otimes\mS=\sum_{J,\{n_{ij}\}} \Big(\prod_{1\leq i\leq j\leq k}r_{ij}^{2n_{ij}}\Big) \ux_J  \cM$$
	where the (not necessarily direct) sum is taken over all sets $J\subset\{1,2,\ldots,k\}$ and all sequences $\{n_{ij}|\ 1\leq i\leq j\leq k\}$ of numbers of $\mN_0$.
\end{lemma}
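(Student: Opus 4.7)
The plan is to argue by induction on the total degree $N$ of a homogeneous polynomial $P\in(\cP\otimes\mS)_N$. The base case $N=0$ is immediate: constants are monogenic, so $\mS\subseteq\cM$ and $P=\ux_\emptyset\,P$ already has the claimed form.

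For the inductive step, apply the scalar harmonic Fischer decomposition (Theorem~\ref{t_hfd}) componentwise in $\mS$ to write
$$
P \;=\; H \;+\; \sum_{1\leq i\leq j\leq k} r^2_{ij}\,Q_{ij},
$$
with $H\in(\cH\otimes\mS)_N$ and $Q_{ij}\in(\cP\otimes\mS)_{N-2}$. By the inductive hypothesis, each $Q_{ij}$ lies in the right-hand side, and multiplying by $r^2_{ij}$ simply increments the corresponding $n_{ij}$. Thus the problem reduces to showing that every harmonic spinor-valued polynomial $H$ belongs to $\sum_{J,\{n_{ij}\}}\bigl(\prod r^{2n_{ij}}_{ij}\bigr)\ux_J\cM$.

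To handle $H$, exploit the fact that each $\ux_j$ is $\Spin(m)$-invariant in $\cA=\cP\otimes\End(\mS)$, so left multiplication by $\ux_J$ followed by the harmonic projection $\pi:\cP\otimes\mS\to\cH\otimes\mS$ of Theorem~\ref{t_hfd} yields a $\Spin(m)$-equivariant map
$$
\Phi_J:\cM\longrightarrow\cH\otimes\mS,\qquad M\mapsto\pi(\ux_J M).
$$
Decompose the target via Theorem~\ref{t_isodh} and Lemma~\ref{l_idhs}: for each partition $a\in\mN_0^k$,
$$
\cH_{(a)}\otimes\mS \;\simeq\; \bigoplus_{J'}\cM^S_{a-\epsilon(J')}\otimes F_{a+\tfrac m2 1_k},
$$
the $\Spin(m)$-types on the right appearing with multiplicity one (and $\lagl(k)$-multiplicity $\dim F_{a+\tfrac m2 1_k}$). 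Since $\ux_J$ is $\Spin(m)$-invariant, Schur's lemma and the multiplicity-one statement force $\Phi_J$ to send the $\Spin(m)$-isotypic component $\cM_{(b)}$ of $\cM$ into the unique $\cM^S_b$-summand of $\cH_{(a)}\otimes\mS$ with $|a|=|b|+|J|$ and $b=a-\epsilon(J')$; note this forces $|J|=|J'|$ but permits $J\neq J'$.

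The main obstacle is to show that the collective image $\sum_J\Phi_J(\cM)$ exhausts $\cH\otimes\mS$, i.e.\ surjects onto the full multiplicity space $F_{a+\tfrac m2 1_k}$ of each summand. My strategy is to verify non-vanishing on highest-weight vectors: taking an explicit highest-weight simplicial monogenic $M\in\cM^S_b$ from the models of~\cite{CLS}, one checks by a direct leading-term computation that $\pi(\ux_J M)$ is a nonzero highest-weight vector of the desired $\cM^S_b$-type in $\cH^S_a\otimes\mS$; propagating the result through the $\lat_+$-raising action, together with Lemma~\ref{l_odm} and Theorem~\ref{t_idm}, fills out the full $\lagl(k)$-multiplicity space. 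The hard part is precisely this linear-algebraic step: while a count $\binom{k}{|J'|}\dim F_{a-\epsilon(J')}\geq\dim F_{a}$ supports feasibility, showing that the distinct $\ux_J$'s produce linearly independent contributions requires an explicit combinatorial check. Once surjectivity is in hand, choose $M_J\in\cM$ with $\pi(\sum_J\ux_J M_J)=H$; then $\sum_J\ux_J M_J-H\in\ker\pi=\sum_{i\leq j}r^2_{ij}(\cP\otimes\mS)_{N-2}$, and the outer induction applied to this residue completes the proof.
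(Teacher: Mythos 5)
Your proposal reverses the logical order of the paper and, as you yourself acknowledge, leaves the decisive step unfinished. The paper proves this lemma by an elementary Fischer-duality argument that invokes no representation theory at all, and \emph{then} deduces the harmonic statement $\cH_\ell\otimes\mS=\bigoplus_J\pi(\ux_J\cM_{\ell-|J|})$ (Theorem~\ref{t_idh}) as a consequence. Concretely, the paper observes that under the Fischer inner product multiplication by $\ux_j$ is adjoint (up to sign) to $\partial_{\ux_j}$, so inside $\cP_\ell\otimes\mS$ one has $\bigl(\ux_1\cP_{\ell-1}\otimes\mS+\cdots+\ux_k\cP_{\ell-1}\otimes\mS\bigr)^{\perp}=\cM_\ell$, hence $\cP\otimes\mS=\cM\oplus\bigl(\ux_1\cP\otimes\mS+\cdots+\ux_k\cP\otimes\mS\bigr)$. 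Induction on degree then gives $\cP\otimes\mS=\sum_w w\,\cM$ over all words $w$ in $\ux_1,\ldots,\ux_k$, and the Clifford relation $\ux_j\ux_i=-\ux_i\ux_j+2r^2_{ij}$ reorders each word into the normal form $\prod r^{2n_{ij}}_{ij}\,\ux_J$, which finishes the proof. No harmonic projection, no highest-weight model, and no multiplicity count is required.

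By contrast, your plan reduces the lemma to showing that $\sum_J\pi(\ux_J\cM)$ exhausts $\cH\otimes\mS$, which is precisely the spanning half of Theorem~\ref{t_idh}, and proposes to establish it by a ``direct leading-term computation'' on explicit simplicial monogenics. That is the genuine gap. The Klimyk/Pieri multiplicity count only tells you that the $\Spin(m)$-modules $\bigoplus_J\cM_{\ell-|J|}$ and $\cH_\ell\otimes\mS$ are \emph{abstractly} isomorphic; it does not tell you that the concrete maps $M\mapsto\pi(\ux_J M)$ assemble into a surjection, since a priori their images could overlap or miss part of a multiplicity space $F_{a+\frac m2 1_k}$. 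In the paper this difficulty is sidestepped entirely: surjectivity comes for free from Lemma~\ref{l_wmfd}, and the multiplicity count is used afterwards only to upgrade the spanning sum to a direct sum. Proving surjectivity from scratch would require exactly the non-degeneracy verification you flag as ``hard'' and do not carry out, so the argument as written does not close. The short route is the orthogonality-plus-Clifford-relations argument above; the structure you want to lean on lives downstream of the lemma you are trying to prove.
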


\begin{proof} Let $\ell\in\mN_0$. With respect to the Fischer inner product, we have inside $\cP_{\ell}\otimes\mS$ that
	$$(\ux_1\cP_{\ell-1}\otimes\mS+\cdots+\ux_k\cP_{\ell-1}\otimes\mS)^{\perp}=\cM_{\ell}.$$ Therefore we get
	$$\cP\otimes\mS=\cM\oplus(\ux_1\cP\otimes\mS+\cdots+\ux_k\cP\otimes\mS).$$
	By induction, we have easily that
	$$\cP\otimes\mS=\sum w\cM$$
	where the sum is taken over all finite products $w$ of the variables $\ux_1,\ux_2,\ldots,\ux_k$.
	To finish the proof we use the relations $\ux_j\ux_i=-\ux_i\ux_j+2r^2_{ij}$.
\end{proof}

To complete the proof of the main result, we need to decompose spinor valued spherical harmonics into monogenic polynomials. 
To do this, for $\ell\in\mN_0$, it is easy to see  the decomposition
\begin{equation}\label{e_HarmDecomp}
\cP_{\ell}\otimes\mS=(\cH_{\ell}\otimes\mS)\oplus\sum_{1\leq i\leq j\leq k} r^2_{ij}\;(\cP_{\ell-2}\otimes\mS).
\end{equation}
Indeed, with respect to the Fischer inner product, we have inside $\cP_{\ell}\otimes\mS$ that
	$$\Big(\sum_{1\leq i\leq j\leq k} r^2_{ij}\;(\cP_{\ell-2}\otimes\mS)\Big)^{\perp}=\cH_{\ell}\otimes\mS.$$
The projection from $\cP_{\ell}\otimes\mS$ onto $\cH_{\ell}\otimes\mS$ corresponding to the decomposition \eqref{e_HarmDecomp}	
is denoted by $\pi$, and we call it the harmonic projection.

\begin{thm}\label{t_idh} 
	For $\ell\in\mN_0$, we have
	$$\cH_{\ell}\otimes\mS=\bigoplus_J \pi(\ux_J\cM_{\ell-|J|})$$
	where the direct sum is taken over all sets $J\subset\{1,2,\ldots,k\}$ and
	$|J|$ is the number of elements of $J$. 
	Here $\pi:\cP_{\ell}\otimes\mS\mapsto\cH_{\ell}\otimes\mS$ is the harmonic projection.
	In addition, for any $\ell\in\mN_0$, the following vector spaces are isomorphic
	$$\pi(\ux_J\cM_{\ell})\simeq \ux_J\cM_{\ell}\simeq\cM_{\ell}.$$
\end{thm}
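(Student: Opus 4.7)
The plan is to prove the theorem by establishing that the assembly map
\[
\Phi_\ell\colon \bigoplus_{J\subseteq\{1,\ldots,k\}}\cM_{\ell-|J|}\longrightarrow \cH_\ell\otimes\mS,\qquad (M_J)_J\longmapsto \sum_J \pi(\ux_J M_J),
\]
is an isomorphism. Once this is done, the direct-sum decomposition of $\cH_\ell\otimes\mS$ is immediate, and the stated chain $\pi(\ux_J\cM_\ell)\simeq \ux_J\cM_\ell\simeq \cM_\ell$ will follow by restricting $\Phi_\ell$ to a single summand, together with a separate check that left multiplication by $\ux_J$ is injective.

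Surjectivity of $\Phi_\ell$ is immediate from Lemma \ref{l_wmfd}: any harmonic $H\in\cH_\ell\otimes\mS$ can be expanded as $\sum(\prod r_{ij}^{2n_{ij}})\ux_J M_{J,n}$, and since $\pi$ annihilates every term with some $n_{ij}>0$, one gets $H=\pi(H)=\sum_J\pi(\ux_J M_{J,\mathbf{0}})$. The injectivity of $\ux_J$ on $\cM_\ell$ (and indeed on all of $\cP\otimes\mS$) follows by iteration from the Clifford identity $\ux_j^2=-r_{jj}^2$: if $\ux_j P=0$ then $r_{jj}^2 P=-\ux_j(\ux_j P)=0$, forcing $P=0$.

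The core step is the dimension equality $\dim(\cH_\ell\otimes\mS)=\sum_J \dim \cM_{\ell-|J|}$; combined with surjectivity this forces $\Phi_\ell$ to be an isomorphism. I verify the equality by matching the $\Spin(m)$-multiplicity of each simplicial monogenic $\cM^S_b$ on both sides. Theorem \ref{t_isodh} together with Lemma \ref{l_idhs} gives
\[
[\cH_\ell\otimes\mS:\cM^S_b]=\sum_{\substack{|J|=\ell-|b|\\ b+\epsilon(J)\text{ is a partition}}} \dim F_{b+\epsilon(J)+\frac{m}{2}1_k},
\]
while Theorem \ref{t_idm} yields $\sum_J[\cM_{\ell-|J|}:\cM^S_b]=\binom{k}{\ell-|b|}\dim F_{b+\frac{m}{2}1_k}$. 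These coincide by the Pieri rule for $\lagl(k)$, which decomposes
\[
F_{b+\frac{m}{2}1_k}\otimes\textstyle\bigwedge^{\ell-|b|}\mC^k \;\simeq\bigoplus_{\substack{|J|=\ell-|b|\\ b+\epsilon(J)\text{ is a partition}}} F_{b+\epsilon(J)+\frac{m}{2}1_k},
\]
whose left-hand side has dimension $\binom{k}{\ell-|b|}\dim F_{b+\frac{m}{2}1_k}$. In the even-dimensional case the same argument applies verbatim after splitting $\mS=\mS^+\oplus\mS^-$ and using the refinements $\cM^{S,\pm}_a$.

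The main obstacle is precisely the recognition that the multiplicity sum on the $\cH\otimes\mS$ side is the dimension identity coming from Pieri's rule; everything else is then formal. Once $\Phi_\ell$ is known to be bijective, restricting it to the $J$-summand gives a composite $\cM_{\ell-|J|}\xrightarrow{\ux_J}\ux_J\cM_{\ell-|J|}\xrightarrow{\pi}\pi(\ux_J\cM_{\ell-|J|})$ that is injective overall; together with the already-established injectivity of $\ux_J$, this forces both individual arrows to be isomorphisms and delivers the stated chain $\pi(\ux_J\cM_\ell)\simeq \ux_J\cM_\ell\simeq \cM_\ell$ for every $\ell\in\mN_0$.
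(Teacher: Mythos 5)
Your proof is correct and follows essentially the same route as the paper's: surjectivity of the assembly map via Lemma~\ref{l_wmfd} together with the harmonic projection, then a multiplicity count of each $\cM^S_a$ in both $\cH_\ell\otimes\mS$ and $\bigoplus_J\cM_{\ell-|J|}$ matched up by Pieri's rule, and the finite-dimensional dimension count upgrades surjectivity to bijectivity. Your extra step of deriving injectivity of $\ux_J$ from $\ux_j^2=-r_{jj}^2$ is a useful elaboration of what the paper merely labels ``trivial,'' but it does not change the underlying argument.
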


\begin{proof}
	By Lemma \ref{l_wmfd} and \eqref{e_HarmDecomp}, 
	for a~given $\ell\in\mN_0$, we have
	$$\cH_{\ell}\otimes\mS=\sum_J \pi(\ux_J\cM_{\ell-|J|})$$
	where the sum is taken over all sets $J\subset\{1,2,\ldots,k\}$.
	To show that the sum is actually direct it is sufficient to prove that the $\Spin(m)$-modules $M:=\cH_{\ell}\otimes\mS$ and
	$$
	N:=\bigoplus_J \cM_{\ell-|J|}
	$$
	where the direct sum is taken over all sets $J\subset\{1,2,\ldots,k\}$ are isomorphic.
	To do this, we show that multiplicities of each submodule $\cM^S_a$ in the modules $M$ and $N$ are the same.
	Indeed, let $j=1,2,\ldots,k$ and $a\in\mN_0^k$ be a~partition such that $\ell-j=|a|$ where $|a|=a_1+a_2+\cdots+a_k$.
	For $\Spin(m)$-modules $A,B$, denote  the multiplicity of $A$ in $B$ by $[A:B]$.  
	Then, by Theorem \ref{t_idm}, we have   
	\begin{equation}\label{multN}
	[\cM^S_a:N]=\sum_{J:|J|=j} [\cM^S_a:\cM_{\ell-|J|}]={k \choose j}\dim F_{a} 
	\end{equation}
	where the sum in the middle expression is taken over all subsets $J\subset\{1,2,\ldots,k\}$ with $j$ elements.
	Here we use the fact that $\dim F_{a+\frac m2 1_k}=\dim F_{a}$.
	On the other hand, using Lemma \ref{l_idhs}, we get
	\begin{equation}\label{multM}
	[\cM^S_a:M]=\sum_{J:|J|=j} \dim F_{a+\epsilon(J)}
	\end{equation}
	because $[\cM^S_a:\cH^S_{a+\epsilon(J)}\otimes\mS]=1$ and $[\cH^S_{a+\epsilon(J)}\otimes\mS:\cH_{\ell}\otimes\mS]=\dim F_{a+\epsilon(J)}$. 
	Here $F_b=0$ and $\cH^S_b=0$ unless $b\in\mN_0^k$ is a~partition.
	Hence to prove the equality $[\cM^S_a:M]=[\cM^S_a:N]$ we need to show that 
	$${k \choose j}\dim F_{a}=\sum_{J:|J|=j} \dim F_{a+\epsilon(J)}.$$ 
	But this follows directly from Pieri's rule for $\GL(k)$-modules
	$$\largewedge^j(\mC^k)\otimes F_a\simeq\bigoplus_{J:|J|=j} F_{a+\epsilon(J)} $$
	where $\largewedge^j(\mC^k)$ is the $j$-th antisymmetric power of the defining representation $\mC^k$ for $\GL(k)$ and 
	$\dim\largewedge^j(\mC^k)={k \choose j}$, see \cite{HL}. 
	
	In addition, for each $\ell\in\mN_0$, we have proved that $\pi(\ux_J\cM_{\ell})\simeq\cM_{\ell}.$ Finally, the fact that 
	$\ux_J\cM_{\ell}\simeq\cM_{\ell}$ is trivial.
\end{proof}

\begin{remark} The use of Pieri's rule for $\GL(k)$-modules in the proof of Theorem \ref{t_idh} is not a~chance. In fact, from the proof, it is not difficult to see that
	$$\cH\otimes\mS\simeq\largewedge(\ux_1,\ux_2,\ldots,\ux_k)\otimes\cM$$ 
	as $\Spin(m)\times \lagl(k)$-modules. Here the action of $\lagl(k)$ is trivial on $\mS$ and  the action of $\Spin(m)$ is trivial on $\largewedge(\ux_1,\ux_2,\ldots,\ux_k)$. 
\end{remark}

\begin{proof}[Proof of Theorem \ref{t_mfd}]
	Let $\ell\in\mN_0$. By Lemma \ref{l_wmfd}, we know that 
	\begin{equation}\label{e_mfd1}
	\cP_{\ell}\otimes\mS=\sum \Big(\prod_{1\leq i\leq j\leq k}r_{ij}^{2n_{ij}}\Big) \ux_J  \cM_t
	\end{equation}
	where the sum is taken over all $J$ and $\{n_{ij}\}$ such that $\ell=t+|J|+2\sum_{1\leq i\leq j\leq k} n_{ij}$.
	On the other hand, by Theorems \ref{t_hfd} and \ref{t_idh}, we have
	\begin{equation}\label{e_mfd2}
	\cP_{\ell}\otimes\mS=\bigoplus \Big(\prod_{1\leq i\leq j\leq k}r_{ij}^{2n_{ij}}\Big) \pi(\ux_J  \cM_t)\simeq \bigoplus\Big(\prod_{1\leq i\leq j\leq k}r_{ij}^{2n_{ij}}\Big) \ux_J  \cM_t
	\end{equation} where the direct sums are taken over the same set of parameters $J$ and $\{n_{ij}\}$ as in \eqref{e_mfd1}. 
	Finally, we get \eqref{e_mfd} from \eqref{e_mfd1} and \eqref{e_mfd2}.
\end{proof}

\section{Isotypic components} 

In this section, we describe the structure of isotypic components for $\Spin(m)$ in the space of spinor valued polynomials.
To do this, recall the decomposition of $\laosp(1|2k)$ from Subsection \ref{ss_osp}
$$\laosp(1|2k)=\laf_-\oplus\lap_-\oplus\lat\oplus\lap_+\oplus\laf_+.$$ Given a~finite dimensional irreducible modul $F_{\lambda}$ for $\lat\simeq\lagl(k)$, we extend the action of $\lan_{-}:=\laf_-\oplus\lap_-$ on $F_{\lambda}$ trivially and define the generalized Verma module $V_{\lambda}$ for $\laosp(1|2k)$ as the induced module
$$V_{\lambda}=\Ind_{\lan_-\oplus\lat}^{\laosp(1|2k)} F_{\lambda}.$$

\begin{thm}\label{t_isod} 
	If $m\geq 2k$, then we have
\begin{equation}\label{e_isod}
	\cP((\mR^m)^k)\otimes\mS\simeq\bigoplus_a \cM^S_{a}\otimes V_{a+\frac m2 1_k}
\end{equation}
where the direct sum is taken over all partitions $a\in\mN_0^k$. 
\end{thm}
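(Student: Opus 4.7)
The plan is to combine the monogenic Fischer decomposition with the $\Spin(m)\times\lagl(k)$-decomposition of $\cM$ from Theorem~\ref{t_idm} and then recognize each multiplicity space as a generalized Verma module via the universal property of parabolic induction. First I would reformulate Theorem~\ref{t_mfd} as the assertion that the multiplication map $\cJ\otimes\cM\to\cP\otimes\mS$ is a vector space isomorphism; since the generators $\ux_j$ of $\cJ$ are $\Spin(m)$-invariant, this is in fact an isomorphism of $\Spin(m)$-modules (with trivial action on the $\cJ$-factor). Substituting $\cM\simeq\bigoplus_a\cM^S_a\otimes F_{a+\frac{m}{2}1_k}$ then yields
$$\cP\otimes\mS\;\simeq\;\bigoplus_a \cM^S_a\otimes\bigl(\cJ\otimes F_{a+\frac{m}{2}1_k}\bigr)$$
as $\Spin(m)$-modules, so the $\cM^S_a$-multiplicity space $W_a$ is isomorphic to $\cJ\otimes F_{a+\frac{m}{2}1_k}$ as a vector space. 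Because every generator of $\laosp(1|2k)$ is $\Spin(m)$-equivariant, $W_a$ inherits a natural $\laosp(1|2k)$-action, and the theorem reduces to an isomorphism $W_a\simeq V_{a+\frac{m}{2}1_k}$ of such modules.

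Next I would construct a canonical map $V_{a+\frac{m}{2}1_k}\to W_a$ via the universal property of induction. The subspace $1\otimes F_{a+\frac{m}{2}1_k}\subset W_a$ corresponds under the Fischer identification to $\cM_{(a)}\subset\cM$; by Theorem~\ref{t_idm} it is a copy of $F_{a+\frac{m}{2}1_k}$ as a $\lat$-module, and it is annihilated by $\lan_-=\laf_-\oplus\lap_-$. Indeed, $\laf_-$ acts by the Dirac operators $\partial_{\ux_j}$, which annihilate monogenic polynomials by definition, and $\lap_-$ acts by $\Delta_{ij}$, but the identity $\partial_{\ux_i}\partial_{\ux_j}+\partial_{\ux_j}\partial_{\ux_i}=-2\Delta_{ij}$ forces monogenic polynomials to be harmonic as well. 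Since $V_{a+\frac{m}{2}1_k}=\Ind_{\lan_-\oplus\lat}^{\laosp(1|2k)}F_{a+\frac{m}{2}1_k}$, its universal property produces a unique $\laosp(1|2k)$-equivariant homomorphism extending this embedding.

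To conclude, I would verify bijectivity by a freeness/PBW argument. Under the Fischer identification, left multiplication on $\cP\otimes\mS$ by the generators $r_{ij}^2$ and $\ux_j$ of $\lap_+\oplus\laf_+$ corresponds to left multiplication on the $\cJ$-factor of $\cJ\otimes\cM$, making $W_a$ a free $\cJ\simeq\cU(\lap_+\oplus\laf_+)$-module of rank $\dim F_{a+\frac{m}{2}1_k}$ on the generating subspace $1\otimes F_{a+\frac{m}{2}1_k}$. By PBW for Lie superalgebras, $V_{a+\frac{m}{2}1_k}$ is also a free $\cU(\lap_+\oplus\laf_+)$-module of the same rank on $F_{a+\frac{m}{2}1_k}$, so the constructed map, carrying one generating space isomorphically onto the other, must be an isomorphism.

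The main obstacle is ensuring that the $\laosp(1|2k)$-action on $W_a$ genuinely matches the induced structure on $V_{a+\frac{m}{2}1_k}$: the vector space identification is immediate from Theorems~\ref{t_mfd} and \ref{t_idm}, but the freeness of $W_a$ over $\cJ$ relies crucially on the fact that Theorem~\ref{t_mfd} is a statement about the multiplication map (not merely about a direct sum decomposition), which is precisely the input that forces the universal-property map from the Verma module to have zero kernel.
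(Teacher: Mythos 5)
Your proposal is correct and takes essentially the same approach as the paper, whose one-sentence proof cites Theorems~\ref{t_mfd} and \ref{t_idm} together with the PBW theorem for Lie superalgebras. You have simply spelled out the details that the paper leaves implicit: identifying the multiplicity space of $\cM^S_a$ with $\cJ\otimes F_{a+\frac m2 1_k}$, checking that the bottom layer is annihilated by $\lan_-$ (via the Clifford relation forcing monogenics to be harmonic), invoking the universal property of the induced module, and matching graded ranks over $\cU(\lap_+\oplus\laf_+)\simeq\cJ$ via PBW.
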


\begin{proof} The decomposition \eqref{e_isod} follows directly from Theorems \ref{t_mfd} and \ref{t_idm}  and PBW theorem for Lie superalgebras (see \cite[Theorem 1.36, p. 31]{CW}). 
\end{proof}

We expect that $\laosp(1|2k)$-modules $V_{a+\frac m2 1_k}$ that occur in the decomposition \eqref{e_isod} are irreducible. But this question seems to remain open. 



\def\bibname{Bibliography}

\end{document}